\documentclass[final,onefignum,onetabnum]{siamart220329}

\usepackage{amsmath}
\usepackage{mathdots}
\usepackage{amsfonts}
\usepackage{amssymb}
\usepackage{latexsym}
\usepackage{footmisc}
\usepackage{graphicx}
\usepackage{pgfplots}
\usepackage{todonotes}
\usepackage{algorithm}
\usepackage{algpseudocode}
\usepackage{cleveref}
\usepackage[most]{tcolorbox}

\newcommand{\actionbox}[1]{\begin{tcolorbox}[colback=white,colframe=black,width=\columnwidth,boxsep=5pt,arc=4pt]
  #1
\end{tcolorbox}}
\usepackage{color}
\usepackage{verbatim}
\usepackage{url}
\usepackage{bm}

\def\mainalg{NoisyChebtrunc}

\newcommand{\diag}{{\rm diag}}

\DeclareMathOperator*{\minimize}{minimize}

\usepackage[normalem]{ulem}

\newcommand{\ignore}[1]{}
\newcommand{\argmin}{\mathop{\rm argmin}\limits}

\newcommand{\vct}[1]{\bm{#1}}
\newcommand{\mtx}[1]{\bm{#1}}
\newcommand{\T}{\bm{T}}

\title{
Polynomial approximation of noisy functions 
}
\author{Takeru Matsuda\thanks{Department of Mathematical Informatics, Graduate School of Information Science and Technology, The University of Tokyo, Japan. \& Statistical Mathematics Unit, RIKEN Center for Brain Science, Japan. (\email{matsuda@mist.i.u-tokyo.ac.jp}). Supported by JSPS KAKENHI Grant Numbers 22K17865, 24K02951 and JST Moonshot Grant Number JPMJMS2024.}
\and Yuji Nakatsukasa\thanks{Mathematical Institute, University of Oxford, Oxford, OX2 6GG, UK, (\email{nakatsukasa@maths.ox.ac.uk}). Supported by EPSRC grants EP/Y010086/1 and EP/Y030990/1.} }

\begin{document}
\maketitle

\begin{abstract}
	Approximating a univariate function on the interval $[-1,1]$ with a polynomial is among the most classical problems in numerical analysis. 
	When the function evaluations come with noise, a least-squares fit is known to reduce the effect of noise as more samples are taken. 
	The generic algorithm for the least-squares problem requires $O(Nn^2)$ operations, where $N+1$ is the number of sample points and $n$ is the degree of the polynomial approximant.
	This algorithm is unstable when $n$ is large, for example $n\gg \sqrt{N}$ for equispaced sample points. In this study, we blend numerical analysis and statistics to introduce a stable and fast $O(N\log N)$ algorithm called \mainalg\ based on Chebyshev interpolation.
	It has the same error reduction effect as least-squares and the convergence is spectral until the error reaches $O(\sigma \sqrt{{n}/{N}})$, where $\sigma$ is the noise level, after which the error continues to decrease at the Monte-Carlo $O(1/\sqrt{N})$ rate. 
	To determine the polynomial degree, \mainalg\ employs a statistical criterion, namely Mallows' $C_p$. 
	We analyze \mainalg\ in terms of the variance and concentration in the infinity norm to the underlying noiseless function. These results show that with high probability the infinity-norm error is bounded by a small constant times $\sigma \sqrt{{n}/{N}}$, when the noise {is} independent and follows a subgaussian or subexponential distribution.
	We illustrate the performance of \mainalg\ with numerical experiments.
\end{abstract}

\begin{keywords}
Polynomials, appproximation, noise, Chebyshev interpolation, variance reduction, Monte Carlo, concentration inequality, uniform convergence, Lebesgue constant 
  \end{keywords}

\begin{AMS}
	65D15, 62G05
\end{AMS}

\headers{Approximating noisy functions}{Takeru Matsuda and Yuji Nakatsukasa}

\section{Introduction}

Approximating a function $f$ on $[-1,1]$ by a polynomial is a classical and fundamental problem in numerical analysis\footnote{Throughout we will focus on the domain $[-1,1]$; this loses no generality as one can employ a trivial linear transformation to map any real interval $[a,b]$ to $[-1,1]$.}. 
Among the most successful  algorithms is Chebyshev interpolation~\cite[Ch.~2]{trefethenatap}, based on 
sampling $f$ at the Chebyshev points $x_i = \cos(i\pi/N)$ for $i=0,1,\ldots, N$, and finding a polynomial interpolant $\tilde p_N$ of degree $N$ such that $\tilde p_{N}(x_i)=f(x_i)$ for $i=0,1,\ldots, N$. 
Chebyshev interpolation combines \emph{speed} requiring only $O(N\log N)$ operations, \emph{stability} (the computation relies on the FFT, a unitary operation) and \emph{convergence} (essentially optimal Lebesgue constant, i.e., the error is within $O(\log N)$ of the best possible, resulting in spectral convergence, i.e., the smoother $f$ is, the faster~\cite[Ch.~7,8]{trefethenatap}; in particular, the convergence is exponential when $f$ is analytic on $[-1,1]$).

In classical approximation theory, it is assumed that the function $f$ can be evaluated exactly, i.e., without noise. 
In most cases in practice, however, evaluation of $f$ comes with noise, such as measurement or representation error. 
For example, the result of evaluation at $x$ may be given by 
\begin{equation}\label{eq:eval}
	y = f(x)   + \epsilon,\qquad \epsilon\sim \mathcal{N}(0,\sigma^2), 
\end{equation}
where $\mathcal{N}(0,\sigma^2)$ denotes the Gaussian distribution\footnote{Many of the results hold more generally for other noise distributions. We sometimes assume the noise is subgaussian or subexponential in our analysis; this will be made explicit.
	{We mostly assume that each evaluation of $f$ comes with \emph{independent} noise.}
} with mean 0 and (possibly unknown) variance $\sigma^2>0$.
Naturally, the ideal goal is to find ${p_*}$, the best approximation to $f$. 
Given the noise,  clearly no algorithm will be able to find $p_*$ exactly. But how can we approximate $f$ (or $p_*$) as accurately as possible? 
One would naturally hope to obtain an approximant with accuracy roughly equal to the noise level. It turns out that we can do much better. 

It is widely known in statistics and signal processing that performing a least-squares (LS) fitting/regression
in the noisy setting can help reduce the noise effect and avoid overfitting~\cite{dahlquist2008numerical,wasserman2013all}. Namely, to get an approximation $p_n(x)=\sum_{j=0}^n c_j \phi_j(x)$, where $\phi_0,\phi_1,\dots,\phi_n$ are the basis functions (not necessarily polynomials), one solves the least-squares problem 
\begin{equation}
	\label{eq:LSVander}
	\minimize_{\vct{c}} \|\mtx{V}\vct{c}-\vct{y}\|_2^2, 
\end{equation}
where $\mtx{V}\in\mathbb{R}^{(N+1)\times (n+1)}$ is the (generalized) Vandermonde matrix given by $V_{i,j}=\phi_{j-1}(x_{i-1})$. 
The solution of~\eqref{eq:LSVander} gives the coefficients $\vct{c}=[c_0,c_1,\ldots,c_n]^T$. 
While a number of papers have studied, analyzed and used least-squares methods for function approximation in the presence of noise \cite{belloni2015some,tsybakov,wasserman2006all},
to our knowledge, few studies focused on the classical and arguably most basic problem of approximating a univariate noisy function by a polynomial with deterministic sample points.
In addition, most studies do not optimize the computational complexity, presumably assuming a generic solver requiring $O(Nn^2)$ operations for the solution of the least-squares problem (or at least $O(Nn)$, if an iterative solver is used and the matrix is known to be well-conditioned).

In this paper, we propose {an $O(N\log N)$} method for polynomial approximation of a univariate noisy function, which we call \mainalg.
This method is based on truncating the Chebyshev interpolant at an appropriate degree and corresponds to solving a weighted least-squares problem. By taking advantage of the special structure that arises in Chebyshev interpolation, one can leverage many of the attractive properties (speed, stability and spectral convergence) to deal with the noisy case, while benefiting from statistical convergence results to reduce the noise effect (Monte-Carlo/central limit theorem (CLT) type noise reduction via sampling). 
We also employ a statistical tool, namely Mallows' $C_p$ \cite{mallows2000some}, to determine the polynomial degree in a data-driven fashion. Some discussion is given in~\cite{cohen2013stability} on such degree selection strategy when the noise level is unknown; however, details are not worked out there. 
While the idea of truncating  the Chebyshev interpolant is not at all a new idea and discussed e.g. in~\cite{aurentz2017chopping}, the effect of truncation, choice of degree and its analysis in the presence of noise has not been explored extensively. 

We also examine the convergence of \mainalg\ by blending classical numerical analysis tools (Chebyshev interpolation, Lebesgue constant etc) with concentration inequalities \cite{wainwright2019high}.
Specifically, we derive high-probability, non-asymptotic bounds with explicit constants for the convergence in the \emph{infinity} norm $\|\cdot \|_\infty$ to the unknown function $f$.
The convergence is at a spectral rate, until it reaches 
$O(\sigma \sqrt{{n}/{N}})$; note that the transition point depends on the number of sample points $N$ and noise level $\sigma$, 
and the error can be reduced further (but slowly) by increasing $N$, at the Monte Carlo rate of $O(1/\sqrt{N})$. 
Such results {in the $L_2$ norm} are implicit in e.g.~\cite{cohen2013stability,migliorati2015convergence} (where the focus is on choosing a good randomized sampling strategy), but not emphasized very much in the statistics literature, where the primary concern is the asymptotic behavior of convergence; see e.g. \cite{tsybakov}. 
We demonstrate through numerical experiments that the bounds we derive are reasonably indicative.

Overall, \mainalg\ 
combines (i) computational efficiency with $O(N\log N)$ operations, (ii) stability inherent in Chebyshev interpolation, leading to $L_\infty$ convergence results with high probability, 
and (iii) Monte-Carlo style noise reduction as more samples are taken. For large enough $N$, the error is $O(\sigma \sqrt{n/N})$. 
Note that Mallows' $C_p$ allows us to determine the polynomial degree without prior knowledge of the noise level $\sigma$.

We view this paper as a marriage of numerical analysis and statistics. Much of the paper uses standard tools from one of these subjects, but by putting things together we arrive at a powerful algorithm for polynomial approximation of noisy functions. 
Let us highlight the key innovations from each viewpoint:
\begin{itemize}
	\item In numerical analysis, the use of Chebyshev interpolation via the DCT is standard \cite{trefethenatap}. However, noise reduction 
	and approximation beyond the noise level is not often discussed, and the use of Mallows' $C_p$ for degree selection is not a standard tool in the field. While our algorithm is based on the basic tool of Chebyshev interpolation, to our knowledge, this paper is the first to show that its truncated version has attractive properties in the noisy setting. Recent papers \cite{cohen2013stability,cohen2,migliorati2015convergence} explore the convergence of approximation obtained by LS methods. These focus on the case where the sample points are random (drawn from a prescribed distribution), whereas in this paper we choose them deterministically to be the Chebyshev points, and highlight their attractive properties. In addition, many of these previous papers derive error bounds in the $L_2$ norm, while here we establish $L_{\infty}$ error bounds. 
	\item In statistics, 
	the problem of approximating an unknown function from noisy observations is classically discussed under the name of nonparametric regression~\cite[Ch.~5]{wasserman2006all}. 
	Many methods have been developed for this problem, such as kernel regression (Nadaraya--Watson estimate), local polynomials and splines \cite{tsybakov,wasserman2006all}, where the sample points are often equispaced. 
	Compared to approximants obtained by these methods, polynomial approximants by \mainalg\ are simpler to work with (e.g. to differentiate, integrate or find roots). 
	Also, \mainalg\ attains $O(N\log N)$ computational efficiency  by utilizing Chebyshev points as sample points.
	Note that \mainalg\ can be interpreted as the projection estimator with the Fourier basis \cite[Sec.~1.7]{tsybakov} applied to the periodic function $g(z)=f(\cos z)$ for $z \in [-\pi,\pi]$, where $f$ itself need not be periodic on $[-1,1]$\footnote{Here periodic means smoothness including the endpoints; functions like the Runge function $f(x)=1/(25x^2+1)$, which are periodic in terms of $f$, should not be regarded as periodic here as $f'$ is not periodic.}.

	Another classical technique is polynomial least-squares regression~\eqref{eq:LSVander} from equispaced samples (or those uniformly at random); 
	here the degree is usually low (e.g. bounded by the dozens~\cite[Sec.~1.1]{bishop2006pattern}). 
	This method can converge spectrally if the degree $n$ is chosen appropriately, and we compare it with \mainalg\ in the forthcoming discussions. 
	In numerical analysis, it is not unusual to take the degree in the thousands or even millions, as such degree may be necessary to achieve high accuracy for functions that are not smooth~\cite{trefethenatap}; and algorithms are available to make such computations feasible. 
	We will see that the usual polynomial least-squares can lead to stability issues when the degree is large (close to the number of sample points); an issue \mainalg\  overcomes. 
	Also note that \mainalg\ can be viewed as solving weighted least-squares problems, as shown in Section~\ref{sec:weight}.
\end{itemize}

{\it Notation}. Throughout, the observations are $N+1$ samples 
$\{(x_i,y_i)\}_{i=0}^N$, where as in~\eqref{eq:eval}, each evaluation is 
\begin{equation} \label{eq:yieval}  
	y_i=f(x_i)   + \epsilon_i, 
\end{equation}
and the noises $\{ \epsilon_i \}_{i=0}^N$ are independent random variables such as $\epsilon_i \sim \mathcal{N}(0,\sigma^2)$.
$n$ is the degree of the polynomial approximant. $T_i(x)$ denotes the $i$th Chebyshev polynomial of the first kind, and $\|\cdot\|_\infty$ denotes the infinity norm of functions on $[-1,1]$, so $\|f\|_\infty=\sup_{x\in[-1,1]}|f(x)|$. We use boldface lowercase letters to denote vectors, e.g.,~$\vct{y}=[y_0,y_1,\ldots,y_N]^T$ (vectors are in $\mathbb{R}^{N+1}$ with the exception of the coefficient vector $\vct{c} \in \mathbb{R}^{n+1}$), and boldface uppercase letters for matrices. $\mathbb{E}$ denotes the expected value over the random variables $\{ \epsilon_i \}_{i=0}^N$. 

\subsection{Motivation and illustration} 

{Let us motivate the algorithm by demonstrating that it is possible to obtain accuracy much higher than noise level $\sigma$.} Consider approximating the Runge function $f(x)=1/(25x^2+1)$, whose evaluations are contaminated by independent Gaussian noise $\epsilon_i \sim \mathcal{N}(0,\sigma^2)$ as in~\eqref{eq:yieval} with noise level $\sigma=10^{-4}$.
We compute the polynomial interpolants of $\{ y_i \}_{i=0}^N$ at $(N+1)$ Chebyshev points $\{ x_i \}_{i=0}^N$ by the DCT or FFT~\cite{strang1999discrete,trefethen2000spectral}, where we vary the degree $N\in \{2^5,2^{7},2^{22}\}$. 
{Owing to the $O(N\log N)$ complexity, each computation takes only a fraction of a second on a standard laptop.}

Figure~\ref{fig:coeffs0} (left) plots the magnitudes of the Chebyshev coefficients (available via Chebfun's {\tt plotcoeffs} command), 
that is, $|c_j|$ where $\tilde{p}_N(x) = \sum_{j=0}^N c_jT_{j}(x)$ is the polynomial interpolant of degree $N$. 
We do the same for the noiseless target function $f$ (approximated to $10^{-15}$ accuracy by a Chebyshev expansion), whose coefficients decay exponentially and forms a straight dotted line in the figure. Note that as $f$ is an even function, its odd-degree coefficients are all 0. 
The right panel of Figure~\ref{fig:coeffs0} plots the error $|f(x)-\tilde p_N(x)|$. 

\begin{figure}[htbp]
	\centering
	\begin{minipage}[t]{.495\linewidth}
		\includegraphics[width=.98\textwidth]{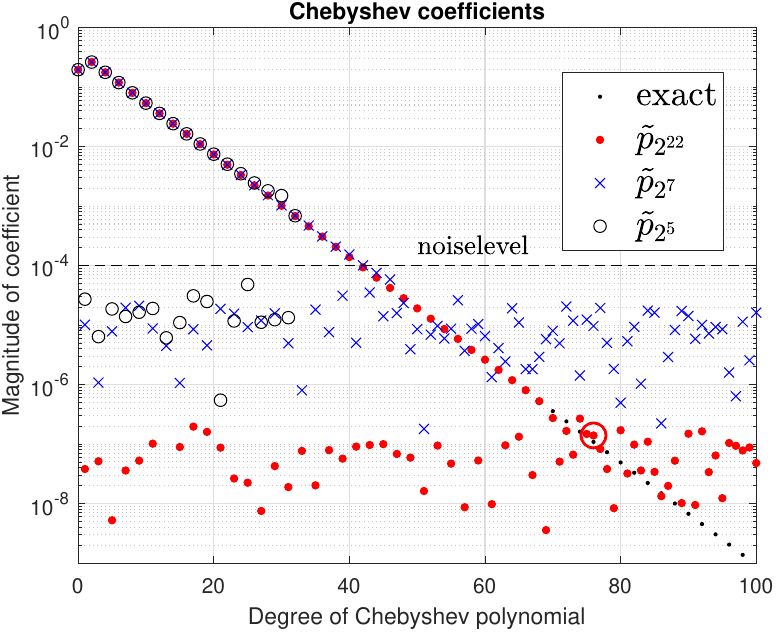}            
	\end{minipage}
	\begin{minipage}[t]{.495\linewidth}
		\includegraphics[width=.935\textwidth]{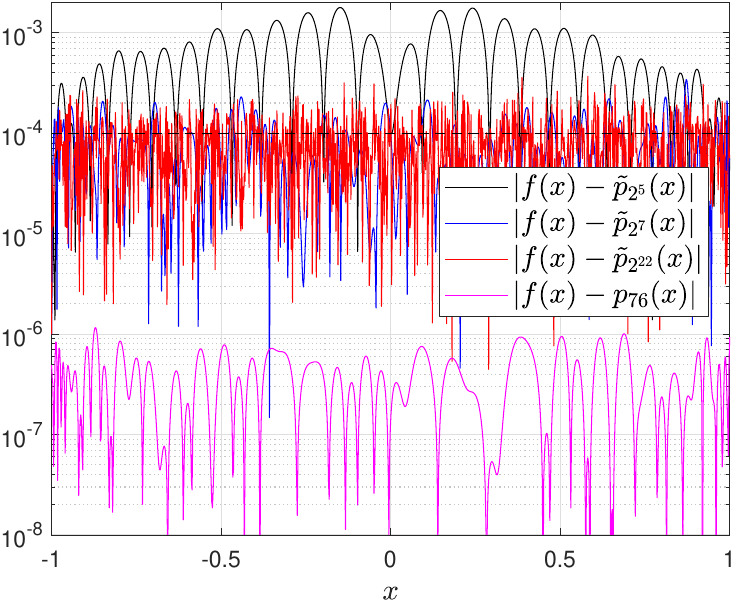}            
	\end{minipage}
	\caption{Left: Chebyshev coefficients (leading $100$ terms) of the noiseless function $f$ (black dots) and the Chebyshev interpolants $\tilde p_{2^d}$ of the noisy evaluations of $f$ of degrees $2^5,2^7$ and $2^{22}$. The red circle indicates the degree $76$ at which Mallows' $C_p$ truncates. 
		Right: Error plots $|f(x)-\tilde p_N(x)|$ for the interpolants with $N=2^5,2^7,2^{22}$, along with \mainalg's output $p_{76}$. 
	}
	\label{fig:coeffs0}
\end{figure}

Figure~\ref{fig:coeffs0} illustrates a number of phenomena worth highlighting. First, with the degree-$2^5$ interpolant $\tilde p_{2^5}$ (we use tilde for interpolants), the Chebyshev coefficients do not decay to the noise level $\sigma=10^{-4}$, and the error $|f(x)-\tilde p_{2^5}(x)|$ is dominated by the truncation/aliasing error in Chebyshev interpolation; i.e., the degree is not high enough. This is essentially the regime of classical approximation theory, and convergence is improved by increasing the degree; here one should take $n=N$. A sensible interpretation is that the polynomial degree should be (at least) $40$, where the Chebyshev coefficients reach the noise level. 

With the interpolants of degrees $2^7$ and $2^{22}$, the degree is taken large enough for $c_j$ to decay below noise level. However, we see a clear difference between the two: whereas the accuracy of the $\tilde p_{2^7}$ coefficients are good only up to around $c_{40}$, those of $\tilde p_{2^{22}}$ are accurate much further, up to around\footnote{A least-squares approach with $n=76$ gives very similar coefficients to those of $p_{2^{22}}$ up until $c_{76}$, with obviously no coefficients beyond $c_{77}$. We will explore such connections and differences further in what follows.}
$c_{70}$. 
The difference manifests itself also in terms of the magnitudes of the tail coefficients ($c_j$ for $j\geq 80$; equivalently, as the odd coefficients are 0 for the exact function $f$, we can examine the magnitudes of $c_{2k+1}$ for any $k$): those of $p_{2^7}$ are roughly on the order of $10^{-5}$, while those of $\tilde p_{2^{22}}$ are about $10^{-7}$. 
This behavior, that higher-degree interpolants have better accuracy in individual coefficients, is a general phenomenon, and can be observed regardless of the particular observations of the noise, or the choice of $f$. 

Given these, one might expect $\tilde p_{2^{22}}$ to better approximate $f$ than $\tilde p_{2^7}$. However, this is not the case. From the right panel of Figure~\ref{fig:coeffs0}, we see that the comparison between 
$|f(x)-\tilde p_{2^7}(x)|$ and $|f(x)-\tilde p_{2^{22}}(x)|$ is inconclusive, 
and that both errors are roughly on the noise level $10^{-4}$ rather than the improved accuracy suggested by the plot of coefficients in the left panel of Figure~\ref{fig:coeffs0}. 
This might seem contradictory, but the explanation is that the tail coefficients $c_j$ for $j>80$ collectively contribute to error on the noise level: there are more than $4$ million such $c_j$ for $\tilde p_{2^{22}}$, while only $58$ for $\tilde p_{2^7}$; these are mostly a result of the noise, and have little to do with $f$. 
In Section~\ref{sec:var} we quantify how taking $n$ too large can impair the accuracy. 
Arguably, here $\tilde p_{2^{22}}$ is a worse approximant than $\tilde p_{2^{7}}$ as it has a higher degree, and hence more costly to work with (e.g. to differentiate, integrate or find roots, for the downstream application). 

Fortunately, there is a natural and simple workaround: truncate the Chebyshev expansion of $\tilde p_{2^{22}}$. 
A systematic way to choose an appropriate truncation degree using Mallows' $C_p$ criterion will be discussed in Section~\ref{sec:degselect}; for the moment, we truncate it to degree $76$ (which $C_p$ chooses) and call it $p_{76}$; 
the lack of tilde indicates that it is not an interpolant at the sample points $\{x_i\}_{i=0}^N$. 
From the right panel of Figure~\ref{fig:coeffs0}, the polynomial $p_{76}$ visibly has much better accuracy than the rest\footnote{One can further improve the accuracy slightly by zeroing out the odd-degree coefficients of $p_{76}$, which are clearly artifacts of the noise because the function $f$ is even. This is related to what is known as sparse estimation in statistics \cite{hastie2015statistical}.
	\label{fnlabel}}, and has error $|f(x)-p_{76}(x)| \approx 10^{-6}$, achieving error reduction of two orders of magnitude relative to $\tilde p_{2^{22}}$, {i.e., the noise level $\sigma$}. 
Overall, the construction of $p_{76}$ is an instance of our proposed algorithm \mainalg: 
Sample $f$ (with noise) at as many Chebyshev points as possible. Then find the polynomial interpolant, and truncate its Chebyshev expansion at degree $n$ chosen by Mallows' $C_p$. 
The main purpose of this paper is to explain why this process yields a good approximation of $f$. 

\section{Algorithm} The essence of our algorithm \mainalg\ is laid out in the preceding example: Chebyshev interpolation followed by truncation. The remaining question is how to choose the degree. 
For this purpose, we employ Mallows' $C_p$ \cite{mallows2000some} in statistics, which was originally developed for least-squares estimates like~\eqref{eq:LSVander}.
We derive Mallows' $C_p$ for \emph{weighted} least-squares estimates because, as shown below, Chebyshev interpolation can be viewed as the solution of a (series of) weighted least-squares problem. 
We therefore first show that \mainalg\ is mathematically (but not computationally; \mainalg\ is more efficient) equivalent to weighted least-squares. 

\subsection{Weighted least-squares and \mainalg}\label{sec:weight}
In what follows we always take the basis functions in \eqref{eq:LSVander} to be Chebyshev polynomials $\phi_j(x)=T_j(x)$ on $[-1,1]$ and $\vct{x}=[x_0,\ldots,x_N]^T$ are Chebyshev points $x_i = \cos(i\pi/N)$.
The least-squares problem is expressed as  
\begin{equation}  \label{eq:Tc-f}
	\minimize_{\vct{c}} \|\mtx{T}\vct{c}-\vct{y}\|_2^2,
\end{equation}
where $\mtx{T}$ is an $(N+1)\times (n+1)$ matrix with elements $\mtx{T}_{i,j} = T_{j-1}(x_{i-1})$, that is, $\T$ 
is the Vandermonde matrix with respect to the Chebyshev polynomials of the first kind, and the degree $n<N$ is usually prescribed, or at least an upper bound on $n$ is given. 
Having solved \eqref{eq:Tc-f} to obtain 
$\vct{c} = (\mtx{T}^T\mtx{T})^{-1}\mtx{T}^T\vct{y}$, 
we obtain a polynomial approximant $\hat{p}_n(x)=\sum_{j=0}^n c_j T_j(x)$.

As a minor but important generalization, one can also consider a \emph{weighted} least-squares problem 
\begin{equation}  \label{eq:DTc-f}
	\minimize_{\vct{c}} \|\mtx{D}(\T\vct{c}-\vct{y})\|_2^2
\end{equation} 
for a diagonal matrix $\mtx{D} \in \mathbb{R}^{(N+1) \times (N+1)}$. 
Its solution is 
\[
\vct{c}=((\mtx{D} \mtx{T})^T (\mtx{D} \mtx{T}))^{-1} (\mtx{D} \mtx{T})^T \mtx{D} \vct{y}=(\mtx{T}^T \mtx{D}^2 \mtx{T})^{-1} \mtx{T}^T \mtx{D}^2 \vct{y}.
\]
We note a simple but important fact: 

\begin{lemma}\label{lem:equiv}
	Let 
	$\vct{x}=[x_0,\ldots,x_N]^T$ be Chebyshev points $x_i = \cos(i\pi/N)$ and $\vct{y}=[y_0,\ldots,y_N]^T$, where $y_i\in\mathbb{R}$. Define
	\begin{equation}  \label{eq:defD}
		\mtx{D}=\mbox{diag}\left( \frac{1}{\sqrt{2}},1,1,\ldots,1,\frac{1}{\sqrt{2}} \right) \in \mathbb{R}^{(N+1) \times (N+1)}. 
	\end{equation}
	Then for each $n=0,\ldots, N$, the solution of \eqref{eq:DTc-f} is equal to $\vct{c}=[c_0,\ldots,c_n]
	$, where 
	$\tilde p_{N}(x)=\sum_{j=0}^N c_jT_{j}(x)$ is the unique interpolant of $\vct{y}$ at $\vct{x}$. 
\end{lemma}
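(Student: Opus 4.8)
The plan is to exploit the discrete orthogonality of Chebyshev polynomials at the Chebyshev points, which is precisely what the endpoint weights $1/\sqrt{2}$ in $\mtx{D}$ encode. First I would observe that, since $D_{ii}^2=\tfrac12$ for $i\in\{0,N\}$ and $D_{ii}^2=1$ otherwise, the weighted inner product induced by $\mtx{D}$ is
\begin{equation*}
  \vct{u}^T\mtx{D}^2\vct{v} = \tfrac12 u_0v_0 + \sum_{i=1}^{N-1}u_iv_i + \tfrac12 u_Nv_N,
\end{equation*}
that is, the standard discrete inner product on the Chebyshev grid in which the two endpoint contributions are halved.

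Write $\ttt_j=[T_j(x_0),\ldots,T_j(x_N)]^T$ for the $(j{+}1)$th column of $\T$. The key step is the classical discrete orthogonality relation, for $0\le j,k\le N$,
\begin{equation*}
  \ttt_j^T\mtx{D}^2\ttt_k = 0 \quad (j\neq k), \qquad \ttt_j^T\mtx{D}^2\ttt_j = \begin{cases} N, & j\in\{0,N\},\\ N/2, & 0<j<N.\end{cases}
\end{equation*}
This follows by substituting $x_i=\cos(i\pi/N)$, so that $T_j(x_i)=\cos(ij\pi/N)$, and evaluating the resulting cosine sums $\sum_{i=0}^N{}''\cos(ij\pi/N)\cos(ik\pi/N)$ (endpoints halved) via product-to-sum identities; this is the orthogonality underlying the DCT \cite{trefethenatap}. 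Hence the full $(N{+}1)\times(N{+}1)$ Gram matrix $\T^T\mtx{D}^2\T$ is diagonal, and therefore so is every leading $(n{+}1)\times(n{+}1)$ principal submatrix, which is exactly $\T^T\mtx{D}^2\T$ for the degree-$n$ problem \eqref{eq:DTc-f}.

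The diagonality decouples the normal equations. For degree $n$ the weighted least-squares solution is $\vct{c}=(\T^T\mtx{D}^2\T)^{-1}\T^T\mtx{D}^2\vct{y}$, and since the system matrix is diagonal this reads entrywise
\begin{equation*}
  c_j = \frac{\ttt_j^T\mtx{D}^2\vct{y}}{\ttt_j^T\mtx{D}^2\ttt_j}, \qquad j=0,\ldots,n.
\end{equation*}
Crucially, each $c_j$ depends only on $j$ and on the fixed grid (through $N$), never on the truncation degree $n$. Taking $n=N$ makes $\T$ a square nonsingular Chebyshev--Vandermonde matrix, so the fit is exact and the residual vanishes; the resulting $c_0,\ldots,c_N$ are then precisely the Chebyshev coefficients of the unique degree-$N$ interpolant $\tilde p_N$ of $\vct{y}$ at $\vct{x}$. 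For a general $n<N$ the decoupled formula returns the first $n{+}1$ of these same coefficients, which is the asserted truncation $[c_0,\ldots,c_n]$.

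The only genuine work lies in the orthogonality relation of the second step; everything else is bookkeeping, and the essential conceptual point is simply that the $1/\sqrt2$ weights reproduce exactly the halved endpoint terms. I therefore expect the orthogonality computation to be the main (albeit standard) obstacle, and would either cite the DCT orthogonality directly or include the short trigonometric derivation. The one detail I would verify carefully is the endpoint bookkeeping — confirming that the diagonal entries come out as $N$ at $j\in\{0,N\}$ and $N/2$ for $0<j<N$ — since this is where the specific form of $\mtx{D}$ is indispensable.
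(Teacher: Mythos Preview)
Your proposal is correct and follows essentially the same approach as the paper: both rely on the discrete orthogonality of the columns of $\mtx{D}\T$ (equivalently, the diagonality of $\T^T\mtx{D}^2\T$) to decouple the normal equations, so that each coefficient $c_j$ is independent of the truncation degree $n$ and hence agrees with the interpolation coefficient obtained at $n=N$. The paper phrases the decoupling via the abstract fact that LS problems with orthogonal column blocks split, while you write out the entrywise formula explicitly, but the substance is the same.
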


\begin{proof}  
	We first note a discrete orthogonality property of Chebyshev polynomials. 
	Let $\mtx{T}_N\in\mathbb{R}^{(N+1)\times (N+1)}$ be the \emph{square}  Chebyshev Vandermonde matrix, or equivalently, the matrix $\mtx{T}$ when $n=N$ (that is, $\mtx{T}$ is the first $n+1$ columns of $\mtx{T}_N$). Then these matrices satisfy~\cite[\S~4.6]{mason2010chebyshev} 
	$(\mtx{DT}_N)^T(\mtx{DT}_N)=\frac{N}{2}\mtx{D}^{-2}$, so $\mtx{DT}_N$ has orthogonal columns, and hence so does $\mtx{DT}$. 
	
	Now consider the linear system for interpolation
	\begin{equation}
		\label{eq:interp}
		\mtx{T}_N\vct{c}_N=\vct{y} \quad \Leftrightarrow \quad 
		\mtx{DT}_N\vct{c}_N=\vct{Dy}. 
	\end{equation}
	We focus on the latter weighted version, as it is the matrix $\mtx{DT}_N$ that has orthogonal columns. The solution is 
	\begin{equation}  \label{eq:cN}
		\vct{c}_N=
		((\mtx{DT}_N)^T\mtx{DT}_N)^{-1}(\mtx{DT}_N)^T\mtx{Dy}
		=\frac{2}{N}\mtx{D}^2\mtx{T}_N^T\!\mtx{D}^2\vct{y}.   
	\end{equation}
	\[
	\]
	It remains to show that these are exactly the coefficients one obtains by solving the LS problem~\eqref{eq:DTc-f}; more precisely, the $i$th element of $\vct{c}_N$ and $\vct{c}$ are equal. 
	To do so, we use the basic fact that for any LS problem of the form $\minimize_{\vct{x}_1,\vct{x}_2}\left\|[\mtx{M}_1\ \mtx{M}_2]
	\begin{bmatrix}
		\vct{x}_1\\\vct{x}_2\end{bmatrix}-\vct{b}\right\|_2$
	where $\mtx{M}_1$ and $\mtx{M}_2$ are orthogonal 
	$\mtx{M}_1^T\mtx{M}_2=0$, the problem can be decoupled and we have $\vct{x}_1 = \argmin\left\|\mtx{M}_1\vct{x}_1-\vct{b}\right\|_2$ and 
	$\vct{x}_2 = \argmin\left\|\mtx{M}_2\vct{x}_2-\vct{b}\right\|_2$. Finally note that $\mtx{DT}$ is simply the leading $n+1$ columns of $\mtx{DT}_N$. 
\end{proof}

Lemma~\ref{lem:equiv} implies that we can solve \eqref{eq:DTc-f} for all $n$ by performing Chebyshev interpolation, which, in turn, can be done in $O(N\log N)$ operations using the FFT (or DCT\footnote{One can alternatively use the DCT-2 transformation, which has the benefit of resulting in a Chebyshev Vandermonde matrix that has orthogonal columns. Here we use DCT-1 as it allows us to use Chebfun's functions effortlessly.}), as is well known in numerical analysis~\cite{strang1999discrete,trefethen2000spectral}. 
It also benefits our development of \mainalg: we are now able to introduce Mallows' $C_p$ for weighted least-squares problems for degree selection, as discussed in the next subsection.

\subsection{Degree selection by Mallows' $C_p$}\label{sec:degselect}

We employ an extension of Mallows' $C_p$ \cite{mallows2000some,james2013introduction} for selecting the polynomial degree $n$ based on the observation $(x_0,y_0),\dots,(x_N,y_N)$. 
Mallows' $C_p$ is a classical criterion in statistics for evaluating the goodness of fit of a regression model estimated by least squares, and hence can be used to select variables (in our context, the degree $n$).
When the error is Gaussian and the noise level $\sigma^2$ is known, it is equivalent to the famous Akaike Information Criterion (AIC) \cite{akaike1998information,konishi2008information}. 
Notably, Mallows' $C_p$ does not require the noise level $\sigma^2$ to be known.
Here, we describe
Mallows' $C_p${, slightly generalized} to the context of weighted least-squares for degree selection in \mainalg.

We summarize the notation to be used in Lemma~\ref{lem_Cp} below.
Let $\bar{n}$ be some upper bound of the polynomial degree\footnote{We set $\bar{n}=\lfloor (N+1)/2\rfloor$ in our experiments. 
}. For $\ell=0,1,2,\dots,\bar{n}$, let
\begin{align*}
	\hat{\vct{c}}_{\ell} &= \argmin_{\vct{c}_{\ell}} \| \mtx{D}(\mtx{T}_{\ell} \vct{c}_{\ell} -\vct{y}) \|_2^2 \\
	&= (\mtx{T}_{\ell}^T \mtx{D}^2 \mtx{T}_{\ell})^{-1} \mtx{T}_{\ell}^T \mtx{D}^2 \vct{y} \in \mathbb{R}^{\ell+1}
\end{align*}
be the weighted least-squares estimate of the Chebyshev coefficients for degree $\ell$ obtained by {\mainalg}, where $\mtx{T}_{\ell}$ is the $(N+1) \times ({\ell}+1)$ matrix that consists of the first $(\ell+1)$ columns of $\mtx{T}_N$.
From Lemma~\ref{lem:equiv}, $\hat{\vct{c}}_{\ell}$ is equal to the first $(\ell+1)$ coefficients of Chebyshev interpolation.
Also, let
\[
\hat{\sigma}^2 = \frac{1}{N-\bar{n}} \| 
\mtx{{D}}(
\mtx{T}_{\bar{n}} \hat{\vct{c}}_{\bar{n}} - \vct{y}) \|_2^2
\]
be an estimate of the error variance $\sigma^2$, which is approximately unbiased when the error is Gaussian and the function $f$ is a polynomial of degree $\bar{n}$ (well-specified setting). 
By using Lemma~\ref{lem:equiv} and $(\mtx{DT}_N)^T(\mtx{DT}_N)=\frac{N}{2}\mtx{D}^{-2}=\frac{N}{2}\mbox{diag}( 2,1,1,\ldots,1,2 )$, we have
\[
\hat{\sigma}^2 = \frac{N}{2(N-\bar{n})} (\| \hat{\vct{c}}_{\bar{n}+1:N} \|_2^2 + \hat{c}_{N}^2),
\]
where $\hat{\vct{c}}_{\bar{n}+1:N}$ denotes the last $(N-\bar{n})$ entries of $\vct{c}$ so that $\vct{c} = [\hat{\vct{c}}_{\bar{n}}^T \ \hat{\vct{c}}_{\bar{n}+1:N}^T]^T$.

The degree selection problem can be viewed as a special case of variable selection in linear regression, for which Mallows' $C_p$ is widely used \cite{james2013introduction,mallows2000some}.
However, the classical form of Mallows' $C_p$ is for unweighted LS estimates, not weighted LS problems as used by \mainalg\ (Lemma~\ref{lem:equiv}).
Thus, for completeness, we describe a natural extension of Mallows' $C_p$ to general linear estimates and general quadratic loss in the following lemma {(we will take $\mtx{M}=\mtx{D}^2$ and $\mtx{B}=\mtx{T}_{\ell} (\mtx{T}_{\ell}^T \mtx{D}^2 \mtx{T}_{\ell})^{-1} \mtx{T}_{\ell}^T \mtx{D}^2$)}. 

\begin{lemma}\label{lem_Cp}
	Let $\vct{y}$ be an $n$-dimensional random vector with mean $\vct{\mu}$ and covariance $\sigma^2 \mtx{I}_n$.
	Let $\hat{\vct{\mu}}=\mtx{B} \vct{y}$ be a linear estimate of $\vct{\mu}$ and $\hat{\sigma}^2$ be an unbiased estimate of $\sigma^2$.
	For an independent copy $\widetilde{\vct{y}}$ of $\vct{y}$, 
	\begin{align*}
		C_p = \| \hat{\vct{\mu}}-\vct{y} \|_{\mtx{M}}^2 + \hat{\sigma}^2 {\rm tr} (\mtx{M} (\mtx{B}+\mtx{B}^{\top}))
	\end{align*}
	is an unbiased estimate of ${\rm E} [ \| \widetilde{\vct{y}}-\hat{\vct{\mu}} \|_{\mtx{M}}^2]$, where $\mtx{M}$ is a $n \times n$ positive definite matrix and $\| \vct{z} \|_{\mtx{M}} = (\vct{z}^T \mtx{M} \vct{z})^{1/2}$.
\end{lemma}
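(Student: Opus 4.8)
The plan is to prove $\mathbb{E}[C_p] = \mathbb{E}[\|\widetilde{\vct{y}} - \hat{\vct{\mu}}\|_{\mtx{M}}^2]$ by evaluating, separately, the expectation of the ``test error'' on the right and the expectation of the ``training residual'' $\|\hat{\vct{\mu}} - \vct{y}\|_{\mtx{M}}^2$ appearing in $C_p$, and then reconciling the two through a degrees-of-freedom (trace) term. This is the classical bias--variance decomposition underlying Mallows' $C_p$, carried out with the general weight $\mtx{M}$ and the general linear map $\mtx{B}$. Throughout I would set $\vct{\epsilon} = \vct{y} - \vct{\mu}$, so that $\mathbb{E}[\vct{\epsilon}] = \vct{0}$ and $\mathbb{E}[\vct{\epsilon}\vct{\epsilon}^T] = \sigma^2 \mtx{I}_n$, and use the independent copy $\widetilde{\vct{y}}$ only through its first two moments.

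First I would expand the test error by inserting $\vct{\mu}$, i.e.\ $\widetilde{\vct{y}} - \hat{\vct{\mu}} = (\widetilde{\vct{y}} - \vct{\mu}) + (\vct{\mu} - \hat{\vct{\mu}})$. Since $\widetilde{\vct{y}}$ is independent of $\hat{\vct{\mu}} = \mtx{B}\vct{y}$ and $\mathbb{E}[\widetilde{\vct{y}} - \vct{\mu}] = \vct{0}$, the resulting cross term has zero expectation, while $\mathbb{E}[\|\widetilde{\vct{y}} - \vct{\mu}\|_{\mtx{M}}^2] = \sigma^2\, {\rm tr}(\mtx{M})$ by the covariance assumption. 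This yields
\begin{equation*}
\mathbb{E}[\|\widetilde{\vct{y}} - \hat{\vct{\mu}}\|_{\mtx{M}}^2] = \sigma^2\, {\rm tr}(\mtx{M}) + \mathbb{E}[\|\hat{\vct{\mu}} - \vct{\mu}\|_{\mtx{M}}^2].
\end{equation*}

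Next I would perform the same insertion on the training residual, writing $\hat{\vct{\mu}} - \vct{y} = (\hat{\vct{\mu}} - \vct{\mu}) - \vct{\epsilon}$, whose expanded squared $\mtx{M}$-norm contributes the diagonal terms $\mathbb{E}[\|\hat{\vct{\mu}} - \vct{\mu}\|_{\mtx{M}}^2] + \sigma^2\,{\rm tr}(\mtx{M})$ together with the cross term $-2\,\mathbb{E}[(\hat{\vct{\mu}} - \vct{\mu})^T \mtx{M}\vct{\epsilon}]$. This cross term is the one computation that carries content: writing $\hat{\vct{\mu}} - \vct{\mu} = (\mtx{B} - \mtx{I}_n)\vct{\mu} + \mtx{B}\vct{\epsilon}$, the deterministic part $(\mtx{B}-\mtx{I}_n)\vct{\mu}$ drops out against $\mathbb{E}[\vct{\epsilon}]=\vct{0}$, leaving $\mathbb{E}[\vct{\epsilon}^T \mtx{B}^T \mtx{M}\vct{\epsilon}] = {\rm tr}(\mtx{B}^T \mtx{M}\,\mathbb{E}[\vct{\epsilon}\vct{\epsilon}^T]) = \sigma^2\,{\rm tr}(\mtx{M}\mtx{B})$, where the last step uses the cyclic property of the trace and the symmetry $\mtx{M}=\mtx{M}^T$. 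Hence
\begin{equation*}
\mathbb{E}[\|\hat{\vct{\mu}} - \vct{y}\|_{\mtx{M}}^2] = \mathbb{E}[\|\hat{\vct{\mu}} - \vct{\mu}\|_{\mtx{M}}^2] + \sigma^2\,{\rm tr}(\mtx{M}) - 2\sigma^2\,{\rm tr}(\mtx{M}\mtx{B}).
\end{equation*}

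Finally I would assemble the pieces. Because ${\rm tr}(\mtx{M}(\mtx{B}+\mtx{B}^T)) = 2\,{\rm tr}(\mtx{M}\mtx{B})$ (again by $\mtx{M}=\mtx{M}^T$) is a deterministic scalar and $\hat{\sigma}^2$ is unbiased, we get $\mathbb{E}[\hat{\sigma}^2\,{\rm tr}(\mtx{M}(\mtx{B}+\mtx{B}^T))] = 2\sigma^2\,{\rm tr}(\mtx{M}\mtx{B})$ with no need to assume any independence between $\hat{\sigma}^2$ and the other terms. Adding this to the training-residual expectation exactly cancels the $-2\sigma^2\,{\rm tr}(\mtx{M}\mtx{B})$ correction and reproduces $\sigma^2\,{\rm tr}(\mtx{M}) + \mathbb{E}[\|\hat{\vct{\mu}} - \vct{\mu}\|_{\mtx{M}}^2]$, which is precisely the test-error expectation from the first step, giving $\mathbb{E}[C_p] = \mathbb{E}[\|\widetilde{\vct{y}} - \hat{\vct{\mu}}\|_{\mtx{M}}^2]$. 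I expect the only genuine obstacle to be the trace identity $\mathbb{E}[\vct{\epsilon}^T \mtx{B}^T \mtx{M}\vct{\epsilon}] = \sigma^2\,{\rm tr}(\mtx{M}\mtx{B})$ and bookkeeping the factor of two and the symmetrization $\mtx{B}+\mtx{B}^T$ it forces; everything else is routine. It is worth noting that the argument uses only the first two moments of the noise (no Gaussianity) and only unbiasedness of $\hat{\sigma}^2$.
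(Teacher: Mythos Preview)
Your proof is correct and follows essentially the same bias--variance decomposition as the paper: both insert $\vct{\mu}$ into the test and training errors, reduce everything to $\mathbb{E}[\|\hat{\vct{\mu}}-\vct{\mu}\|_{\mtx{M}}^2]$ plus trace terms, and identify the $\sigma^2\,{\rm tr}(\mtx{M}(\mtx{B}+\mtx{B}^T))$ correction. Your version is slightly more streamlined in that you keep $\mathbb{E}[\|\hat{\vct{\mu}}-\vct{\mu}\|_{\mtx{M}}^2]$ as an unexpanded quantity appearing on both sides and compute only the cross term, whereas the paper first resolves this into $\|\vct{\mu}-\mtx{B}\vct{\mu}\|_{\mtx{M}}^2 + \sigma^2{\rm tr}(\mtx{B}\mtx{B}^T\mtx{M})$ before cancelling; the substance is identical.
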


\begin{proof}
	From
	\begin{align*}
		{\mathbb{E}} [ \| \hat{\vct{\mu}}-\vct{y} \|_{\mtx{M}}^2] &= {\mathbb{E}} [\vct{y}^{\top} (\mtx{I}_n-\mtx{B})^{\top} \mtx{M} (\mtx{I}_n-\mtx{B}) \vct{y}] \\
		&= {\rm tr} ((\mtx{I}_n-\mtx{B})^{\top} \mtx{M} (\mtx{I}_n-\mtx{B}) {\mathbb{E}} [\vct{y} \vct{y}^{\top}]) \\
		&= {\rm tr} ((\mtx{I}_n-\mtx{B})^{\top} \mtx{M} (\mtx{I}_n-\mtx{B}) (\vct{\mu} \vct{\mu}^{\top}+\sigma^2 \mtx{I}_n)) \\
		&= \| \vct{\mu} -\mtx{B} \vct{\mu} \|_{\mtx{M}}^2 + \sigma^2 {\rm tr} ((\mtx{I}_n-\mtx{B})^{\top} \mtx{M} (\mtx{I}_n-\mtx{B})),
	\end{align*}
	we obtain
	\begin{align*}
		{\mathbb{E}} [ \| {\vct{\mu}}-\hat{\vct{\mu}} \|_{\mtx{M}}^2 ] &= {\mathbb{E}} [ \| {\vct{\mu}}-\mtx{B} \vct{\mu}+\mtx{B} \vct{\mu}-\mtx{B} \vct{y} \|_{\mtx{M}}^2 ] \\
		&= {\mathbb{E}} [\| {\vct{\mu}}-\mtx{B} \vct{\mu} \|_{\mtx{M}}^2] + {\mathbb{E}} [\| \mtx{B} \vct{\mu}-\mtx{B} \vct{y} \|_{\mtx{M}}^2] \\
		&= \| {\vct{\mu}}-\mtx{B} \vct{\mu} \|_{\mtx{M}}^2 + \sigma^2 {\rm tr} (\mtx{B} \mtx{B}^{\top} \mtx{M}) \\
		&= {\mathbb{E}} [\| \hat{\vct{\mu}}-\vct{y} \|_{\mtx{M}}^2] - \sigma^2 {\rm tr} ((\mtx{I}_n-\mtx{B})^{\top} \mtx{M} (\mtx{I}_n-\mtx{B})) + \sigma^2 {\rm tr} (\mtx{B} \mtx{B}^{\top} \mtx{M}),
	\end{align*}
	where we used ${\mathbb{E}} [\| \vct{z} \|_{\mtx{M}}^2] = {\rm tr} (\mtx{\Sigma} \mtx{M})$ for a random vector $\vct{z}$ with mean zero and covariance $\mtx{\Sigma}$.
	Therefore,
	\begin{align*}
		{\mathbb{E}} [\| \widetilde{\vct{y}}-\hat{\vct{\mu}} \|_{\mtx{M}}^2] &= {\mathbb{E}} [\| \widetilde{\vct{y}}-\vct{\mu}+\vct{\mu}-\hat{\vct{\mu}} \|_{\mtx{M}}^2] \\
		&={\mathbb{E}} [\| \widetilde{\vct{y}}-{\vct{\mu}} \|_{\mtx{M}}^2] + {\mathbb{E}} [\| {\vct{\mu}}-\hat{\vct{\mu}} \|_{\mtx{M}}^2] \\
		&= \sigma^2 {\rm tr} (\mtx{M}) +{\mathbb{E}} [\| \hat{\vct{\mu}}-\vct{y} \|_{\mtx{M}}^2] - \sigma^2 {\rm tr} ((\mtx{I}_n-\mtx{B})^{\top} \mtx{M} (\mtx{I}_n-\mtx{B})) + \sigma^2 {\rm tr} (\mtx{B} \mtx{B}^{\top} \mtx{M}) \\
		&= \sigma^2 {\rm tr} (\mtx{M} (\mtx{B}+\mtx{B}^{\top})) + {\mathbb{E}} [\| \hat{\vct{\mu}}-\vct{y} \|_{\mtx{M}}^2] \\
		&= {\mathbb{E}} [C_p],
	\end{align*}
	which shows that $C_p$ is an unbiased estimate of $\| \widetilde{\vct{y}}-\hat{\vct{\mu}} \|_{\mtx{M}}^2$. 
\end{proof}

When $\mtx{B}$ is an orthogonal projection matrix and $\mtx{D}=\mtx{I}_n$, Lemma~\ref{lem_Cp} reduces to the usual theory of Mallows' $C_p$ for least-squares estimates \cite{james2013introduction}.
Note that \cite{yanagihara2010unbiased} extended Mallows' $C_p$ to ridge regression, which corresponds to a specific choice of $\mtx{B}$ in Lemma~\ref{lem_Cp}.

Then, by setting $\mtx{B}=\mtx{T}_{\ell} (\mtx{T}_{\ell}^T \mtx{D}^2 \mtx{T}_{\ell})^{-1} \mtx{T}_{\ell}^T \mtx{D}^2$ and $\mtx{M}=\mtx{D}^2$ in Lemma~\ref{lem_Cp}, we define Mallows' $C_p$ for the polynomial degree $\ell$ by
\begin{align}\label{Cpdef}
	C_p(\ell) = \|\mtx{D} (\mtx{T}_{\ell} \hat{\vct{c}}_{\ell} - \vct{y})\|_2^2 + 2  \hat{\sigma}^2 {\rm tr} (\mtx{D}^2 \mtx{T}_{\ell} (\mtx{T}_{\ell}^T \mtx{D}^2 \mtx{T}_{\ell})^{-1} \mtx{T}_{\ell}^T \mtx{D}^2).
\end{align}
Qualitatively, the first term corresponds to the goodness of fit and becomes smaller for larger $\ell$, while the second term penalizes large degree (which induces over-fitting) and  increases with $\ell$.
Thus, minimization of $C_p$ attains a good trade-off between model fit and model complexity \cite{james2013introduction}.

Let us simplify the expression \eqref{Cpdef} to enable efficient computation. 
First, consider the first term of \eqref{Cpdef}.
Since 
$\mtx{D} \mtx{T}_{N}$ has orthogonal columns and $\mtx{D} \mtx{T}_{N} \vct{c}_N = \mtx{D} \vct{y}$ from \eqref{eq:interp}, we have
$\mtx{D} (\mtx{T}_{\ell} \hat{\vct{c}}_{\ell} - \vct{y})
=-\mtx{D} \mtx{T}_{\ell+1:N} \hat{\vct{c}}_{\ell+1:N}$, 
where $\mtx{T}_{\ell+1:N}$ denotes the last $(N-\ell)$ columns of $\mtx{T}_N$ so that $\mtx{T}_N = [\mtx{T}_{\ell} \ \mtx{T}_{\ell+1:N}]$ and $\hat{\vct{c}}_{\ell+1:N}$ denotes the last $(N-\ell)$ entries of $\vct{c}_N$ so that 
$\vct{c}_N = [\hat{\vct{c}}_{\ell}^T \ \hat{\vct{c}}_{\ell+1:N}^T]^T$.
Then, since 
$\mtx{D} \mtx{T}_{\ell+1:N}$ has orthogonal columns with norms $\sqrt{{N}/{2}}$ (except the final column with norm {$\sqrt{N}$}), 
\begin{align*}
	\|\mtx{D} (\mtx{T}_{\ell} \hat{\vct{c}}_{\ell} - \vct{y})\|_2^2 = \| \mtx{D} \mtx{T}_{\ell+1:N} \hat{\vct{c}}_{\ell+1:N} \|_2^2 = \frac{N}{2}(\|\hat{\vct{c}}_{\ell+1:N}\|_2^2+\hat{c}_{N}^2). 
\end{align*}
Next, consider the second term of \eqref{Cpdef}.
The trace can be rewritten as
\begin{align*}
	{\rm tr} ((\mtx{T}_{\ell}^T \mtx{D}^2 \mtx{T}_{\ell})^{-1} \mtx{T}_{\ell}^T \mtx{D}^4\mtx{T}_{\ell} )  &=
	{\rm tr} ((\mtx{T}_{\ell}^T \mtx{D}^2 \mtx{T}_{\ell})^{-1}
	(\mtx{T}_{\ell}^T (\mtx{D}^2 + (\mtx{D}^4 -\mtx{D}^2 ) ) \mtx{T}_{\ell} )  \\
	&=
	{\rm tr} (\mtx{I}_{\ell+1}-
	(\mtx{T}_{\ell}^T \mtx{D}^2 \mtx{T}_{\ell})^{-1}
	\mtx{T}_{\ell}^T (\mtx{D}^2-\mtx{D}^4 )  \mtx{T}_{\ell} ) \\
	&=
	\ell+1 - {\rm tr} (
	(\mtx{T}_{\ell}^T \mtx{D}^2 \mtx{T}_{\ell})^{-1}
	\mtx{T}_{\ell}^T (\mtx{D}^2-\mtx{D}^4 )  \mtx{T}_{\ell} ).
\end{align*}
Since $(\mtx{DT}_N)^T(\mtx{DT}_N)=\frac{N}{2}\mtx{D}^{-2}$ and $\mtx{T}_{\ell}^T \mtx{D}^2 \mtx{T}_{\ell}$ is its leading 
$(\ell+1)\times (\ell+1)$ part, we have 
$\mtx{T}_{\ell}^T \mtx{D}^2 \mtx{T}_{\ell}=\frac{N}{2}\mbox{diag}(2,1,\ldots,1,1)$ and $(\mtx{T}_{\ell}^T \mtx{D}^2 \mtx{T}_{\ell})^{-1}=
\frac{2}{N}\mbox{diag}(\frac{1}{2},1,\ldots,1,1)$ when $\ell<N$. 
Also, since the first and last row of $\mtx{T}_{\ell}$ are all $\pm 1$s, the diagonal elements of
$
\mtx{T}_{\ell}^T (\mtx{D}^2 -\mtx{D}^4 )  \mtx{T}_{\ell} 
=
\mtx{T}_{\ell}^T \mbox{diag}(\frac{1}{4},0,\ldots,0,\frac{1}{4})  \mtx{T}_{\ell}$ are all $\frac{1}{2}$. Therefore,
\[
{\rm tr} ((\mtx{T}_{\ell}^T \mtx{D}^2 \mtx{T}_{\ell})^{-1}
\mtx{T}_{\ell}^T (\mtx{D}^2-\mtx{D}^4 )  \mtx{T}_{\ell} )
=\frac{2}{N}\left(\frac{1}{4}+\frac{\ell}{2}\right)=\frac{2\ell+1}{2N}. 
\]

In summary, Mallows' $C_p$ in \eqref{Cpdef} is simplified to
\begin{align}\label{Cpdefsimple}
	C_p(\ell) = 
	\frac{N}{2}(\|\hat{\vct{c}}_{\ell+1:N}\|_2^2+\hat{c}_{N}^2)+ 2  \hat{\sigma}^2 \left( \ell+1-\frac{2\ell+1}{2N} \right).
\end{align}
This can be computed for all $\ell$ in $O(N)$ operations, as done in the MATLAB code below. 
In practice, since the Chebyshev coefficients decay rapidly enough, the extra term $\hat{c}_{N}^2$ plays little role and can be ignored. 
Since $C_p(\ell)$ can be viewed as an unbiased {estimator} of the predictive mean squared error from Lemma~\ref{lem_Cp} below, we select the polynomial degree $n$ by minimizing $C_p(\ell)$:
\[
n = \argmin_\ell C_p(\ell).
\]
Minimization of $C_p$ has been shown to attain asymptotic minimaxity in function estimation {in the $L^2$ norm} \cite{massart2001gaussian,tsybakov}.

We note that in a general setting of variable selection in linear regression \cite{james2013introduction}, one would need to fit all possible subsets of variables to choose the best one. Fortunately, here in the context of polynomial approximation this is unnecessary---the variables are already 'ordered' in terms of the Chebyshev degree, so we can simply and reliably consider only the subsets of the leading Chebyshev polynomials. This reduces the computational work dramatically: from $O(2^N)$ to $O(N)$. 
Keeping the low-degree Chebyshev polynomials is a natural strategy, as it can be interpreted as keeping the low-frequency terms in a Fourier series (which is very standard practice), based on the link between Chebyshev series and Fourier series~\cite[Ch.~3]{trefethenatap}. Moreover, a similar point is discussed in~\cite[\S~12.3]{draper1998applied} (Retaining terms in polynomial models). After examining concrete examples, they conclude that, if the $k$-th order term is included in the polynomial regression model, then it is natural to include all the lower order terms up to the $(k-1)$-th order.

\subsection{Algorithm \mainalg}
We now present \mainalg\ in Algorithm~\ref{alg:noisy_interp}. The algorithm essentially truncates {a high-degree} Chebyshev interpolant at a degree determined by Mallows' $C_p$. 

\begin{algorithm}[H]
	\caption{\mainalg: Approximates noisy univariate functions on $[-1,1]$.}
	\label{alg:noisy_interp}
	\begin{algorithmic}[1]
		\Require A computational routine to sample $f:[-1,1]\rightarrow \mathbb{R}$ with noise as in~\eqref{eq:yieval}, 
		and $N$: computational budget on the allowed number of samples ($(N+1)$ Chebyshev points are sampled).
		\Ensure A polynomial $p_n\approx f$ of degree $n(<N)$. 
		\State Sample at $(N+1)$ Chebyshev points 
		$\{x_i\}_{i=0}^N$ to obtain 
		$\{y_i\}_{i=0}^N$. 
		\State Chebyshev interpolation: Find the degree $N$ polynomial interpolant $\tilde p_N(x)=\sum_{i=0}^N c_j T_j(x)$ such that $\tilde p_N(x_i)=y_i$ for $i=0,1,\ldots, N$.  
		\State Mallows' $C_p$: Compute $C_p(\ell)$ in \eqref{Cpdefsimple} for $\ell=0,1,\dots,\bar{n}$, and select $n\in\{0,1,\dots,\bar{n}\}$ that minimizes $C_p(n)$.
		\State Truncate the Chebyshev coefficients at degree $n$. That is, output $p_n(x)=\sum_{j=0}^n c_j T_j(x)$.
	\end{algorithmic}
\end{algorithm}

When $f$ is smooth enough (so that $|c_m|\ll \sigma$ for all $m>n$), the overall approximation error is $O(\sigma\sqrt{{n}/{N}})$, as we make precise in Section~\ref{sec:uniformconv}; note that this can be significantly smaller than the noise level $\sigma$, that is, oversampling (as compared to interpolation $n=N$) reduces the effect of noise. 

The algorithm can be executed in $O(N\log N)$ operations using the DCT or FFT~\cite{trefethenatap}. 
Using the Chebfun toolbox~\cite{chebfunofficial} one can execute the core algorithm succinctly in a few lines of MATLAB codes: 

\actionbox{
	{\tt X = chebpts(N{+1});}\\
	{\tt Y = f(X); \hfill \% sample f w/ noise at Chebyshev pts}\\
	{\tt pN = chebfun(Y); \hfill \% Chebyshev interpolation}\\
	{\tt c = chebcoeffs(pN); \hfill \% Chebyshev coefficients of pN}\\
	{\tt n = MallowsCp(c); \hfill \% choose degree using Mallows' Cp}\\
	{\tt p = chebfun(c(1:n+1),'coeffs'); \hfill\% Output truncated Cheb-coeffs}
}

Aside from {\tt f}, which is the application-dependent noisy evaluation routine, all functions are provided by Chebfun, except 
{\tt MallowsCp}, which implements Mallows' $C_p$ as follows.
Note that the index of an array starts from one, not zero, in MATLAB.

\verbatiminput{MallowsCp.m}

\paragraph{Comparison: \mainalg\ vs. unweighted least-squares}
Lemma~\ref{lem:equiv} shows that the output of 
\mainalg\ is the solution of a weighted least-squares problem~\eqref{eq:DTc-f} with weight $\mtx{D}$ and Chebyshev sampling. 
{Given that $\mtx{D}$ is almost equal to $\mtx{I}$ corresponding to the unweighted problem~\eqref{eq:Tc-f}, it} is unsurprising that there is usually little difference in their approximation quality in practice. 
Nonetheless, the two algorithms differ in important ways: computational cost, input requirement, generality, and numerical stability. 
The unweighted least-squares solution requires $O(Nn^2)$ operations. 
This can sometimes be improved to $O(Nn)$ by using an iterative method such as LSQR~\cite{paige1982lsqr}, when $T$ can be shown to be well-conditioned; this includes the case where $\{x_i\}_{i=0}^N$ are Chebyshev points. Even so, \mainalg\  with $O(N\log N)$ operations is faster unless $n=O(\log N)${, and a high degree $n=O(N)$ is recommended when the noise level is low and the function $f$ is not very smooth}.  

An advantage of the unweighted least-squares approach is that 
the sample points $\{x_i\}_{i=0}^N$ need not be Chebyshev points, so it is applicable more generally. When $\{x_i\}_{i=0}^N$ are prescribed and cannot be chosen by the user, unweighted least-squares becomes the recommended approach. In this case, there is a subtle stability issue that one should bear in mind when using unweighted least-squares, related to the growth of the Lebesgue constant.

\section{Variance analysis}\label{sec:var}
In this section, we study the convergence of {\mainalg} by examining the variance of the approximation $p_n$ evaluated at a specific point {$x\in[-1,1]$}\footnote{Technically $|x|>1$ is allowed and the forthcoming analysis holds for any $x$ {except where 
		$|T(x)|\leq 1$ is used}; however, the Chebyshev polynomials and Lebesgue {function} grow rapidly outside $[-1,1]$, and so does the error.}, that is, $\mbox{Var}[p_n(x)] = \mathbb{E}[(p_n(x)-\mathbb{E}[p_n(x)])^2]$.
Regarding the bias, see the discussion at the end of this section.
Throughout, all expectations are taken with respect to the noise random variables $\{\epsilon_i\}_{i=0}^N$. 
Since the outputs of {\mainalg} and the (unweighted) least-squares method  are closely related and almost identical as just discussed, we shall first analyze the least-squares method, which is more general in that the sample points need not be Chebyshev points, and point out special features that arise when specializing to {\mainalg} in Algorithm~\ref{alg:noisy_interp}. 
The variance of the least-squares method has been studied extensively, including Cohen et al~\cite{cohen2013stability} and Dahlquist and Bjork~\cite[\S~4.5.6]{dahlquist2008numerical}. 
We partially rederive them here, for completeness and because we use the arguments to obtain convergence results in the $L_\infty$ norm in the next section.

\subsection{Unweighted least squares}
First, we consider the (unweighted) least squares method.
Let $\vct{y}=[y_0,y_1,\ldots,y_N]^T \in \mathbb{R}^{N+1}$ and $\mtx{V} \in \mathbb{R}^{(N+1) \times (n+1)}$ be the (generalized) Vandermonde matrix given by 
{$V_{i,j}=\phi_{j-1}(x_{i-1})$}. 
The least-squares problem is given by $\minimize_{\vct{c}} \|\mtx{V}\vct{c}-\vct{y}\|_2^2$.
Its solution is $\vct{\hat c} = (\mtx{V}^{T} \mtx{V})^{-1} \mtx{V}^{T} \vct{y}  \in \mathbb{R}^{n+1}$, which provides the approximation \begin{equation}
	\label{eq:px}
	p_n(x)=\sum_{j=0}^n \hat c_{j} \phi_j(x)=\vct{\phi}(x)^T\vct{\hat c} =\vct{s}(x)^T \vct{y}, \end{equation}
where $\vct{\phi}(x) = [\phi_0(x),\phi_1(x),\ldots, \phi_n(x)]^{T}  \in \mathbb{R}^{n+1}$ and $\vct{s}(x)=\mtx{V} (\mtx{V}^T \mtx{V})^{-1} \vct{\phi}(x)$.
Thus, 
\begin{align}
	\mbox{Var}[p_n(x)] =\mbox{Var}[\vct{s}(x)^T\vct{y}]=\vct{s}(x)^T\mbox{Cov}[\vct{y}]\vct{s}(x) =\vct{s}(x)^T\mbox{Cov}[\vct{\epsilon}]\vct{s}(x), \label{eq:varexact}
\end{align}
where $\mbox{Cov}[\vct{\epsilon}]$ is the covariance matrix of $\vct{\epsilon}$, which is a multiple of identity when the noise is uncorrelated and has the same variance. 
Therefore,  
\begin{align}  
	\mbox{Var}[p_n(x)] \leq \| \vct{s}(x) \|_2^2 \| \mbox{Cov}[\vct{\epsilon}] \|_2 \leq \| \mtx{V} (\mtx{V}^T \mtx{V})^{-1} \|_2^2 \| \vct{\phi}(x) \|_2^2 \|\mbox{Cov}[\vct{\epsilon}]\|_2,  \label{eq:vars}
\end{align}
where $\|\vct{\phi}(x)\|_2$ is what is known as the Christoffel function in the theory of orthogonal polynomials, and studied in detail in~\cite{nevai1986geza}, and extensively in the recent paper~\cite{adcock2024optimal} in the context of function approximation via a least-squares approach, with randomly chosen samples from a distribution determined by the Christoffel function. \mainalg\ distinguishes itself from these in that it is a deterministic algorithm and allows for a fast execution.

\subsection{\mainalg}
We now turn to the analysis of \mainalg. 
This boils down to specializing to the case in Lemma~\ref{lem:equiv}, that is, the weighted least-squares problem~\eqref{eq:DTc-f} is solved with $\vct{x}$ set to the Chebyshev points. 
The analysis above carries over verbatim with the substitution $\vct{s}(x)=\mtx{D}^2 \mtx{T} (\mtx{T}^T \mtx{D}^2 \mtx{T})^{-1} \vct{t}(x)$, where $\vct{t}(x)=[T_0(x),T_1(x),\dots,T_n(x)]$. 
Also, we have $\| \mtx{D}^2 \mtx{T} \|_2 \leq \| \mtx{D} \mtx{T} \|_2 = \sqrt{N}$, where we used $\mtx{T}^T \mtx{D}^2 \mtx{T} =\frac{N}{2} \diag (2,1,\dots,1,1)$.
Also, since $|T_j(x)| \leq 1$ for every $j$ and $x \in [-1,1]$, we have $\| \vct{t}(x) \| \leq \sqrt{n+1}$.
Thus,
\begin{align}
	\| \vct{s}(x) \|_2 &\leq \| \mtx{D}^2 \mtx{T} \|_2 \| (\mtx{T}^T \mtx{D}^2 \mtx{T})^{-1} \|_2 \| \vct{t}(x) \|_2 \leq 2 \sqrt{\frac{n+1}{N}}. \label{norm_s}
\end{align}
Therefore, 
\begin{equation}  \label{eq:varsD}
	\mbox{Var}[p_n(x)] = \vct{s}(x)^T \mbox{Cov}[\vct{\epsilon}] \vct{s}(x) \leq \frac{4(n+1)}{N}\|\mbox{Cov}[\vct{\epsilon}]\|_2,
\end{equation}
which is simply $\frac{4(n+1)}{N}\sigma^2$ when $\epsilon_i$  are uncorrelated with variance $\sigma^2$. 
One clearly sees here that the effect of noise can be reduced by sampling more {keeping $n$ fixed}. This is analogous to the convergence in Monte Carlo methods based on the central limit theorem, with convergence speed governed by the inverse square root of the sample size~\cite{cohen2,MCLSpreprint}. 
It is worth noting that {in practical approximation}, one typically truncates a Chebyshev series once convergence is achieved to the order of either noise level or machine precision. The variance result here highlights the fact that by choosing an appropriate $n$ according to convergence of the coefficients up to the noise level \emph{divided by $\sqrt{N/n}$} 
one can get accuracy better than noise level. 
{The fact that the accuracy can be improved by taking more samples is classical, and is the foundational fact behind Monte Carlo methods. 
	For related recent studies where sample points are taken randomly, we refer to \cite{cohen2013stability,cohen2,migliorati2015convergence}. 
}

The mean squared error of \mainalg\ is the sum of the variance and squared bias:
\[
\mathbb{E}[(p_n(x)-f(x))^2] = \mbox{Var}[p_n(x)] + \mbox{Bias}[p_n(x)]^2,
\]
where $\mbox{Bias}[p_n(x)] =\mathbb{E}[p_n(x)]-f(x)$.
Here, we analyze the bias.
Consider the decomposition $f(x)=p^*_n(x)+r_n(x)$, where 
$p^*_{n}(x)$ is the best (minimax) polynomial approximant that minimizes $\|p-f\|_\infty = \sup_{x \in [-1,1]} |p(x)-f(x)|$ over all polynomials $p$ of degree $n$ or less. 
Define $\vct{p}^*_{n}=[p^*_n(x_0),p^*_n(x_1),\dots,p^*_n(x_N)]^T=\T \vct{c}, \vct{r}_n=[r_n(x_0),r_n(x_1),\dots,r_n(x_N)]^T$, and $\vct{f}=\vct{p}^*_{n}+\vct{r}_{n}$.
Then, \begin{align}
	\label{eq:bias}
	\mbox{Bias}[p_n(x)]&=\mathbb{E}[\vct{s}(x)^T (\vct{f}+\vct{\epsilon})]-(\vct{t}(x)^T \vct{c} + r_n(x)) \nonumber \\
	&=\underbrace{\vct{s}(x)^T \vct{p}_n^* - \vct{t}(x)^T \vct{c}}_{=0} + \vct{s}(x)^T \vct{r}_n-r_n(x) \nonumber \\
	&=\vct{s}(x)^T \vct{r}_n-r_n(x).
\end{align}
Here we used the fact 
$\vct{s}(x)^T \vct{p}_n^* = \vct{t}(x)^T \vct{c}$. This holds because $\vct{s}(x)^T \vct{p}$
is the evaluation at $x$ of the least-squares fit to $\vct{p}$, which is trivially $\vct{p}$ itself if $p$ is a degree-$n$ polynomial. On the other hand, $\vct{t}(x)^T \vct{c}$ is by definition $p_n^*(x)$. 

Therefore, using $\| \vct{r}_n \|_2 \leq \sqrt{N+1} \| r_n \|_{\infty}$ and \eqref{norm_s}, we obtain 
\begin{align*}
	| \mbox{Bias}[p_n(x)] | & \leq \| \vct{s}(x) \|_2 \| \vct{r}_n \|_2 + |r_n(x)| \leq \left( 2 \sqrt{n+1} \sqrt{1+\frac{1}{N}} + 1 \right) \| r_n \|_{\infty}.
\end{align*}
Generally, $\|r_n\|_\infty\rightarrow 0$ as $n\rightarrow\infty$ at a spectral rate.
For example, $r_n$ decays exponentially (resp.~algebraically) when $f$ is analytic (resp.~differentiable) \cite[Ch.~7,8]{trefethenatap}.
Therefore, the bias also decays as $n\rightarrow\infty$ at a spectral rate.

\section{Pointwise and uniform concentration}\label{sec:inf}
We continue with the convergence analysis of \mainalg, and now focus on pointwise and uniform convergence of $p_n$ to $f$ (or $p_n^*$). 
We assume that the noise ${\epsilon_i}$ are {either} subgaussian or subexponential. A random variable $Z$ with mean $\mu$ is said to be subgaussian with parameter $\sigma$ if
\begin{align} \label{subgaussian}
	\mathbb{E} [\exp (\lambda (Z - \mu))] \leq \exp \left( \frac{\sigma^2 \lambda^2}{2} \right)
	\quad \mbox{for all}\quad \lambda \in \mathbb{R}.
\end{align}
{A Gaussian random variable is subgaussian where $\sigma^2$ is simpy its variance.}
Also, a random variable $Z$ with mean $\mu$ is said to be subexponential with parameter $(\nu,\alpha)$ if
\begin{align} \label{subexponential}
	\mathbb{E} [\exp (\lambda (Z - \mu))] \leq \exp \left( \frac{\nu^2 \lambda^2}{2} \right)
	\quad\mbox{for}\quad |\lambda| < \frac{1}{\alpha}.
\end{align}
See \cite{wainwright2019high} for properties of subgaussian/subexponential random variables.

\subsection{Pointwise concentration}\label{sec:pointwise}
In this section we take $x\in[-1,1]$ to be fixed, and examine the error $|p_n(x)-f(x)|$. 
From $|p_n(x)-f(x)| = |p_n(x)-p_n^*(x)-r_n(x)| \leq |p_n(x)-p_n^*(x)|+|r_n(x)| \leq |p_n(x)-p_n^*(x)|+\| r_n \|_{\infty}$,
\begin{align}
	\mathbb{P}[|p_n(x)-f(x)|>t+\| r_n \|_{\infty}] \leq \mathbb{P}[|p_n(x)-p_n^*(x)|>t]. \label{pointwise_1}
\end{align}
Also, from the discussion at the end of the previous section,
\begin{align*}
	p_n(x)-p_n^*(x) &= \vct{s}(x)^T (\vct{f}+\vct{\epsilon}) - \vct{t}(x)^T \vct{c} = \vct{s}(x)^T (\vct{r}_n+\vct{\epsilon}),
\end{align*}
where $\vct{s}(x)=\mtx{D}^2 \mtx{T} (\mtx{T}^T \mtx{D}^2 \mtx{T})^{-1} \vct{t}(x)$.
Thus, using \eqref{norm_s} we obtain
\begin{align*}
	|p_n(x)-p_n^*(x)| &\leq \| \vct{s}(x) \|_2 \| \vct{r}_n \|_2 +|\vct{s}(x)^T \vct{\epsilon}| \\
	&\leq 2 \sqrt{\frac{n+1}{N}} \cdot \sqrt{N+1} \| r_n \|_{\infty} +|\vct{s}(x)^T \vct{\epsilon}| \\
	&\leq \sqrt{8(n+1)} \| r_n \|_{\infty} +|\vct{s}(x)^T \vct{\epsilon}|.
\end{align*}
Therefore,
\begin{align}
	\mathbb{P}[|p_n(x)-p_n^*(x)|>t+\sqrt{8(n+1)} \| r_n \|_{\infty}] \leq \mathbb{P}[|\vct{s}(x)^T \vct{\epsilon}|>t]. \label{pointwise_2}
\end{align}
Combining \eqref{pointwise_1} and \eqref{pointwise_2} {yields}
\begin{align}
	\mathbb{P}[|p_n(x)-f(x)|>t+(\sqrt{8(n+1)}+1) \| r_n \|_{\infty}] \leq \mathbb{P}[|\vct{s}(x)^T \vct{\epsilon}|>t]. \label{pointwise_3}
\end{align}
Note that 
to bound the term 
$| \vct{s}(x)^T \vct{r}_n |$
we used Cauchy-Schwarz. 
This is often a significant overestimate as the $(N+1)$-dimensional vectors $\vct{s}(x)$ and $\vct{r}_n$ are unlikely to be nearly parallel. For example if $\vct{r}_n$ took independent subgaussian entries (which is not true), 
the inner product 
would be $O(\frac{1}{\sqrt{N+1}} \| s(x)\|_2 \|r_n\|_2)$~\cite{vershynin2018high}. If this were true for general $r_n$, the term would be $O(\sqrt{\frac{n+1}{N+1}} \| r_n\|_{\infty})$, roughly $\sqrt{N}$ times smaller. We observe that this indeed seems to be the case in the numerical experiments in Section~\ref{sec:exp}.

By bounding the right-hand side of~\eqref{pointwise_3}
with concentration inequalities, we obtain pointwise concentration results for subgaussian and subexponential cases as follows.

\begin{theorem}\label{thm:point}  
	Suppose that the noise $ \{ \epsilon_i \}$ are independent and subgaussian with parameter $\sigma$ (see \eqref{subgaussian}). Then for any fixed $x\in[-1,1]$, 
	\begin{equation*}
		\mathbb{P} \left[ |p_n(x)-f(x)|>2 t \sigma \sqrt{\frac{n+1}{N}}+(\sqrt{8(n+1)} + 1) \|r_n\|_\infty \right]\leq 2 \exp \left( -\frac{t^2}{2} \right). 
	\end{equation*}
\end{theorem}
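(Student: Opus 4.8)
The plan is to bound the right-hand side of~\eqref{pointwise_3}, which has already isolated the purely stochastic quantity $\vct{s}(x)^T\vct{\epsilon}$ from the deterministic approximation residual. First I would observe that $\vct{s}(x)^T\vct{\epsilon}=\sum_{i=0}^N s_i(x)\epsilon_i$ is a fixed linear combination of the independent, mean-zero, subgaussian noises $\epsilon_i$. By the defining moment generating function bound~\eqref{subgaussian} together with independence, the MGFs multiply, so that $\mathbb{E}[\exp(\lambda\,\vct{s}(x)^T\vct{\epsilon})]\leq\exp(\tfrac{1}{2}\sigma^2\|\vct{s}(x)\|_2^2\lambda^2)$ for all $\lambda$; that is, $\vct{s}(x)^T\vct{\epsilon}$ is itself subgaussian with parameter $\sigma\|\vct{s}(x)\|_2$. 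This closure of the subgaussian class under independent linear combinations is the only probabilistic input needed, and it is standard~\cite{wainwright2019high}.

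Next I would apply the associated Chernoff tail bound: a mean-zero subgaussian variable $Z$ with parameter $\tau$ satisfies $\mathbb{P}[|Z|>u]\leq 2\exp(-u^2/(2\tau^2))$. Taking $Z=\vct{s}(x)^T\vct{\epsilon}$ and invoking the norm estimate~\eqref{norm_s}, namely $\|\vct{s}(x)\|_2\leq 2\sqrt{(n+1)/N}$, gives a parameter $\tau\leq 2\sigma\sqrt{(n+1)/N}$. Setting the threshold to $u=2t\sigma\sqrt{(n+1)/N}$ on the right-hand side of~\eqref{pointwise_3} then produces exactly the threshold and the tail bound $2\exp(-t^2/2)$ claimed in the theorem.

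The one point that warrants a little care---though it is not a genuine obstacle---is that~\eqref{norm_s} supplies only an upper bound on $\|\vct{s}(x)\|_2$, whereas the tail estimate involves its exact value. Because the Chernoff bound is monotone in the ratio of threshold to parameter, replacing the exact parameter $\sigma\|\vct{s}(x)\|_2$ by the larger quantity $2\sigma\sqrt{(n+1)/N}$ only enlarges (weakens) the probability bound, so the inequality remains valid; concretely, with $u=2t\sigma\sqrt{(n+1)/N}$ one has $u^2/(2\tau^2)\geq t^2/2$, which is all that is required. Thus the substantive work has already been carried out upstream---the separation of noise from residual in~\eqref{pointwise_3} and the control of $\|\vct{s}(x)\|_2$ in~\eqref{norm_s}---and this final step merely assembles the subgaussian tail estimate. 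The subexponential analogue would proceed identically, replacing the subgaussian tail bound by the two-regime Bernstein-type bound associated with~\eqref{subexponential}.
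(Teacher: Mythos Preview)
Your proposal is correct and follows essentially the same route as the paper: show that $\vct{s}(x)^T\vct{\epsilon}$ is subgaussian with parameter $\sigma\|\vct{s}(x)\|_2$, apply the Hoeffding/Chernoff tail bound, invoke the norm estimate~\eqref{norm_s}, and feed the result into~\eqref{pointwise_3}. Your remark about monotonicity when replacing $\|\vct{s}(x)\|_2$ by its upper bound is a welcome clarification that the paper leaves implicit.
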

\begin{proof}  
	Let $\vct{s}(x)=[s_0(x),s_1(x),\dots,s_n(x)]^T$.
	Since $s_i (x) \epsilon_i$ is subgaussian with parameter $s_i(x) \sigma$, their sum $\vct{s}(x)^T \vct{\epsilon}$ is also subgaussian with parameter $\sqrt{\sum_{i=1}^Ns_i(x)^2\sigma^2}= \sigma \|\vct{s}(x)\|_2$.
	Then, using the Hoeffding inequality~\cite[Ch.~2]{wainwright2019high} and \eqref{norm_s}, {we obtain}
	\[
	\mathbb{P}[|\vct{s}(x)^T \vct{\epsilon}|>t] \leq 2 \exp \left( -\frac{t^2}{2 \sigma^2 \| \vct{s}(x) \|_2^2} \right) \leq 2 \exp \left( -\frac{N t^2}{8 (n+1) \sigma^2} \right).
	\]
	This is equivalent to 
	\[
	\mathbb{P} \left[ |\vct{s}(x)^T \vct{\epsilon}|>2 t \sigma \sqrt{\frac{n+1}{N}} \right] \leq 2 \exp \left( -\frac{t^2}{2} \right).
	\]
	Combining this with~\eqref{pointwise_3} completes the proof. 
\end{proof}

\begin{theorem}\label{thm:point2}  
	If the noise $ \{ \epsilon_i \}$ are independent and subexponential with parameter $(\nu,\alpha)$ (see \eqref{subexponential}), then 
	\begin{align*}
		& \mathbb{P} \left[ |p_n(x)-f(x)|>2 t \frac{\nu^2}{\alpha} \sqrt{\frac{n+1}{N}}+(\sqrt{8(n+1)} + 1) \|r_n\|_\infty \right]\leq 2 \exp \left( -\frac{\nu^2}{2 \alpha^2} t^2 \right)
	\end{align*}
	for $0 \leq t \leq t_*$, and
	\begin{equation*}\mathbb{P} \left[ |p_n(x)-f(x)|>2 t \frac{\nu^2}{\alpha} \sqrt{\frac{n+1}{N}}+(\sqrt{8(n+1)} + 1) \|r_n\|_\infty \right]\leq 2 \exp \left( -\frac{\nu^2}{2 \alpha^2} t \right). 
	\end{equation*}
	for $t > t_*$, where
	\[
	t_* = \frac{\|\vct{s}(x)\|_2^2}{2 \max_i |s_i(x)|} \sqrt{\frac{N}{n+1}}.
	\]
\end{theorem}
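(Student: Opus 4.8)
The plan is to mirror the proof of Theorem~\ref{thm:point}, replacing the Hoeffding (subgaussian) tail bound with a Bernstein-type bound for subexponential variables, and then carefully tracking the two regimes that the Bernstein bound produces. As in the subgaussian case, everything reduces via~\eqref{pointwise_3} to controlling $\mathbb{P}[|\vct{s}(x)^T\vct{\epsilon}|>u]$ for the appropriate threshold $u$, so the only new ingredient is the tail estimate for the weighted noise sum.

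First I would establish that the weighted sum $\vct{s}(x)^T\vct{\epsilon}=\sum_i s_i(x)\epsilon_i$ is itself subexponential. Since each $\epsilon_i$ is subexponential with parameter $(\nu,\alpha)$, the scaled variable $s_i(x)\epsilon_i$ is subexponential with parameter $(|s_i(x)|\nu,\,|s_i(x)|\alpha)$, directly from~\eqref{subexponential}. Using independence and multiplying the moment generating function bounds, the sum is subexponential with parameter $(\nu\|\vct{s}(x)\|_2,\,\alpha\max_i|s_i(x)|)$, the validity range $|\lambda|<1/(\alpha\max_i|s_i(x)|)$ coming from intersecting the individual ranges. The standard two-regime tail bound for subexponential variables~\cite[Ch.~2]{wainwright2019high} then gives $\mathbb{P}[|\vct{s}(x)^T\vct{\epsilon}|>u]\leq 2\exp\!\left(-u^2/(2\nu^2\|\vct{s}(x)\|_2^2)\right)$ when $u$ is below the transition value $\nu^2\|\vct{s}(x)\|_2^2/(\alpha\max_i|s_i(x)|)$, and $\mathbb{P}[|\vct{s}(x)^T\vct{\epsilon}|>u]\leq 2\exp\!\left(-u/(2\alpha\max_i|s_i(x)|)\right)$ above it.

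Next I would set $u=2t\frac{\nu^2}{\alpha}\sqrt{\frac{n+1}{N}}$, the error threshold appearing in the statement, and identify when this $u$ crosses the transition value. Equating the two (cancelling the common factor $\nu^2/\alpha$) yields precisely $t_*=\frac{\|\vct{s}(x)\|_2^2}{2\max_i|s_i(x)|}\sqrt{\frac{N}{n+1}}$, which explains the form of the threshold in the theorem. For $t\leq t_*$ (the subgaussian-like regime) I would substitute $u$ into the quadratic exponent and bound $\|\vct{s}(x)\|_2^2\leq 4(n+1)/N$ from~\eqref{norm_s} to lower-bound the exponent by $\frac{\nu^2}{2\alpha^2}t^2$, producing the first claimed bound. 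For $t>t_*$ I would substitute into the linear exponent and use $\max_i|s_i(x)|\leq\|\vct{s}(x)\|_2\leq 2\sqrt{(n+1)/N}$ to lower-bound the exponent by $\frac{\nu^2}{2\alpha^2}t$, producing the second bound. Combining with~\eqref{pointwise_3} completes the argument.

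The routine calculations are straightforward; the step requiring the most care is directional consistency of the norm bounds. In each regime one must verify that replacing the exact quantities $\|\vct{s}(x)\|_2$ and $\max_i|s_i(x)|$ by their uniform bound $2\sqrt{(n+1)/N}$ moves the exponent in the probability-decreasing direction: the quadratic-regime step relies on $\|\vct{s}(x)\|_2^2\leq 4(n+1)/N$, while the linear-regime step relies on the trivial inequality $\max_i|s_i(x)|\leq\|\vct{s}(x)\|_2$ together with the same norm bound from~\eqref{norm_s}. The only genuinely new bookkeeping relative to Theorem~\ref{thm:point} is the transition point $t_*$, which arises organically from matching the chosen error threshold $u$ to the boundary of the Bernstein bound, so I do not expect any serious obstacle beyond tracking these constants correctly.
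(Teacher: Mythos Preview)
Your proposal is correct and follows essentially the same route as the paper's proof: both establish that $\vct{s}(x)^T\vct{\epsilon}$ is subexponential with parameter $(\nu\|\vct{s}(x)\|_2,\alpha\max_i|s_i(x)|)$, invoke the two-regime Bernstein-type tail bound from~\cite[Ch.~2]{wainwright2019high}, substitute the threshold $u=2t\frac{\nu^2}{\alpha}\sqrt{(n+1)/N}$, simplify each exponent using~\eqref{norm_s} and $\max_i|s_i(x)|\leq\|\vct{s}(x)\|_2$, and finish via~\eqref{pointwise_3}. Your identification of $t_*$ as exactly the value at which the chosen $u$ crosses the Bernstein transition point is the same observation the paper relies on.
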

\begin{proof}  
	Let $\vct{s}(x)=[s_0(x),s_1(x),\dots,s_n(x)]^T$.
	Since $s_i (x) \epsilon_i$ is subexponential with parameter $(s_i(x) \nu,|s_i(x)| \alpha)$, their sum $\vct{s}(x)^T \vct{\epsilon}$ is also subexponential with parameter $(\nu \|\vct{s}(x)\|_2, \alpha \max_i |s_i(x)|)$.
	Then, by using Prop.~2.9 in \cite{wainwright2019high}, \eqref{norm_s} 
	and $\max_i |s_i(x)| \leq \| \vct{s}(x) \|_2$, we obtain 
	\[
	\mathbb{P}[|\vct{s}(x)^T \vct{\epsilon}|>t] \leq 2 \exp \left( -\frac{t^2}{2 \nu^2 \| \vct{s}(x) \|_2^2} \right) \leq 2 \exp \left( -\frac{N t^2}{8 (n+1) \nu^2} \right)
	\]
	for $0 \leq t \leq \nu^2 \|\vct{s}(x)\|_2^2/(\alpha \max_i |s_i(x)|)$, and
	\[
	\mathbb{P}[|\vct{s}(x)^T \vct{\epsilon}|>t] \leq 2 \exp \left( -\frac{t}{2 \alpha \max_i |s_i(x)|} \right) \leq 2 \exp \left( -\frac{\sqrt{N} \cdot t}{4 \sqrt{n+1} \cdot \alpha} \right)
	\]
	for $t > \nu^2 \|\vct{s}(x)\|_2^2/(\alpha \max_i |s_i(x)|)$.
	Thus,
	\[
	\mathbb{P} \left[|\vct{s}(x)^T \vct{\epsilon}|>2 t \frac{\nu^2}{\alpha} \sqrt{\frac{n+1}{N}} \right] \leq 2 \exp \left( -\frac{\nu^2}{2 \alpha^2} t^2 \right)
	\]
	for $0 \leq t \leq t_*$, and
	\[
	\mathbb{P} \left[|\vct{s}(x)^T \vct{\epsilon}|>2 t \frac{\nu^2}{\alpha} \sqrt{\frac{n+1}{N}} \right] \leq 2 \exp \left( -\frac{\nu^2}{2 \alpha^2} t \right)
	\]
	for $t > t_*$.
	Combining this with~\eqref{pointwise_3} completes the proof. 
\end{proof}

\subsection{Uniform concentration}\label{sec:uniformconv} 
Here is our main theoretical result, which bounds the uniform error by $O({\sigma}\sqrt{{n}/{N}}+\sqrt{n}\|r_n\|_\infty)$ with high probability. 
\begin{theorem}\label{thm:infnorm}
	Suppose that the noise $ \{ \epsilon_i \}$ are independent and subgaussian, with parameter $\sigma$ (see \eqref{subgaussian}). Then for every $t>0$ we have 
	\begin{align*}
		&\mathbb{P}\bigg[\| p_n-f \|_\infty>\left( \frac{2}{\pi}\log(n+1)+1 \right) \sqrt{n+1} \left( 2 t \frac{\sigma}{\sqrt{N}}+\sqrt{8} \|r_n\|_\infty \right) + \|r_n\|_\infty \bigg] \nonumber \\
		\leq \ & 2(n+1)\exp \left( -\frac{t^2}{2} \right).   \label{eq:mainthmexp}
	\end{align*}
\end{theorem}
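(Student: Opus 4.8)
The plan is to reduce the uniform (sup-norm) bound to a finite union bound over $n+1$ points, using the Lebesgue constant of Chebyshev interpolation as the bridge between pointwise and uniform control. The crucial observation is that $p_n-p_n^*$ is a polynomial of degree at most $n$, hence it is reproduced exactly by interpolation at any $n+1$ distinct nodes. Taking these to be the $n+1$ first-kind Chebyshev points $\{\xi_j\}_{j=0}^n$ and using the Lagrange representation $p_n-p_n^* = \sum_{k=0}^n (p_n-p_n^*)(\xi_k)\,\ell_k$, I would bound
\[
\|p_n-p_n^*\|_\infty \le \Lambda_n \max_{0\le j\le n}\big|p_n(\xi_j)-p_n^*(\xi_j)\big|,\qquad \Lambda_n=\sup_{x\in[-1,1]}\sum_{k=0}^n |\ell_k(x)|,
\]
and invoke the classical bound $\Lambda_n \le \frac{2}{\pi}\log(n+1)+1$ for these nodes. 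This is precisely where the logarithmic factor in the statement originates.

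At each fixed node $\xi_j\in[-1,1]$ I would then use the deterministic pointwise estimate established in the derivation of \eqref{pointwise_2}, namely $\big|p_n(\xi_j)-p_n^*(\xi_j)\big| \le \sqrt{8(n+1)}\,\|r_n\|_\infty + \big|\vct{s}(\xi_j)^T\vct{\epsilon}\big|$, together with the subgaussian tail from the proof of Theorem~\ref{thm:point}: since $\vct{s}(\xi_j)^T\vct{\epsilon}$ is subgaussian with parameter $\sigma\|\vct{s}(\xi_j)\|_2 \le 2\sigma\sqrt{(n+1)/N}$ by \eqref{norm_s}, Hoeffding gives $\mathbb{P}[\,|\vct{s}(\xi_j)^T\vct{\epsilon}|>2t\sigma\sqrt{(n+1)/N}\,]\le 2\exp(-t^2/2)$. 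A union bound over the $n+1$ nodes then yields
\[
\mathbb{P}\Big[\max_{0\le j\le n}\big|\vct{s}(\xi_j)^T\vct{\epsilon}\big| > 2t\sigma\sqrt{\tfrac{n+1}{N}}\Big] \le 2(n+1)\exp\!\Big(-\tfrac{t^2}{2}\Big),
\]
which is exactly the source of the prefactor $2(n+1)$ in the theorem.

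Finally I would work on the complementary event, which has probability at least $1-2(n+1)\exp(-t^2/2)$ and on which $\max_j|p_n(\xi_j)-p_n^*(\xi_j)| \le \sqrt{8(n+1)}\|r_n\|_\infty + 2t\sigma\sqrt{(n+1)/N}$. Multiplying this by $\Lambda_n$, factoring out $\sqrt{n+1}$, and adding the triangle-inequality contribution $\|p_n^*-f\|_\infty = \|r_n\|_\infty$ produces exactly the expression inside the probability in the statement.

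The genuinely new ingredient is the first step: because the object of interest $p_n-p_n^*$ is itself a degree-$n$ polynomial, uniform control over the continuum $[-1,1]$ costs only a union bound over $n+1$ points, inflated by the slowly (logarithmically) growing Lebesgue constant—no chaining or $\varepsilon$-net argument over the whole interval is needed. I expect the only delicate point to be bookkeeping the constants consistently: one must apply the Lebesgue-constant bound and the norm bound \eqref{norm_s} at the same nodes, and since \eqref{norm_s} holds for every $x\in[-1,1]$ it suffices to note explicitly that the interpolation nodes $\xi_j$ lie in $[-1,1]$. Everything else is a routine combination of the earlier pointwise analysis with a standard Hoeffding-plus-union-bound estimate.
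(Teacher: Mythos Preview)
Your proposal is correct and follows essentially the same route as the paper's proof: both exploit that $p_n-p_n^*$ is a degree-$n$ polynomial to reduce the uniform bound to a maximum over $n+1$ Chebyshev nodes via the Lebesgue constant, then apply the pointwise subgaussian tail from Theorem~\ref{thm:point} together with a union bound and finish with the triangle inequality. The only cosmetic difference is that the paper phrases the Lebesgue-constant step via the interpolation operator $\mathcal{L}_n$ acting on an auxiliary continuous function, whereas you write out the Lagrange representation directly; the resulting inequality and constants are identical.
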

\begin{proof}
	As in Theorem~\ref{thm:point}, we have for any fixed $x\in[-1,1]$
	\begin{equation*}
		\mathbb{P} \left[ |p_n(x)-p_n^*(x)|>2 t \sigma \sqrt{\frac{n+1}{N}}+\sqrt{8(n+1)} \|r_n\|_\infty \right]\leq 2 \exp \left( -\frac{t^2}{2} \right).
	\end{equation*}
	Taking $x$ to be the $n+1$ Chebyshev points $y_i$ (note that these are not the sample points $x_i$, which are $N+1$ Chebyshev points) and using  the union bound, we see that 
	\begin{align}
		& \mathbb{P}\bigg[|p_n(x)-p_n^*(x)|>2 t \sigma \sqrt{\frac{n+1}{N}}+\sqrt{8(n+1)} \|r_n\|_\infty \ \mathrm{for\ some\ } x\in\{y_0,\ldots,y_{n} \}\bigg] \nonumber \\
		\leq \ & 2(n+1)\exp \left( -\frac{t^2}{2} \right). \label{ineq1}
	\end{align}
	
	Let $\mathcal{L}_n$ be the interpolation operator at the $n+1$ Chebyshev points. 
	The Lebesgue constant is defined by
	\begin{align}  \label{eq:deflebesgue}
		\|\mathcal{L}_n\|:=\sup_{g\in C[-1,1]}\frac{\|\mathcal{L}_ng\|_\infty}{\|g\|_\infty}.  
	\end{align}
	{We} have the sharp estimates~\cite[Ch.~15]{trefethenatap} 
	\[
	\frac{2}{\pi}\log(n+1)+0.52 < \|\mathcal{L}_n\|\leq \frac{2}{\pi}\log(n+1)+1. 
	\]
	We shall take a particular case of 
	$g$ in~\eqref{eq:deflebesgue} as follows. 
	Let $g(y_i)=p_n(y_i)-p_{n}^*(y_i)$ at the $n+1$ Chebyshev points, and let $g$ be such that $\|g\|_\infty=\max_i|p_n(y_i)-p_n^*(y_i)|$ (such a continuous function obviously exists; for example $g$ can be piecewise linear). 
	We have 
	\[ 
	\|\mathcal{L}_ng\|_\infty\leq \|\mathcal{L}_n\|\|g\|_\infty.  
	\]
	Now note that $\mathcal{L}_ng=\mathcal{L}_n(p_n-p_n^*)=p_n-p_n^*$, because the operator $\mathcal{L}_n$ depends only on the values at the Chebyshev points, and interpolation of a degree-$n$ polynomial at $n+1$ points yields the same polynomial. 
	Putting these together, we obtain  
	\begin{align} 
		\|p_n-p_n^*\|_\infty \leq \left( \frac{2}{\pi}\log(n+1)+1 \right) \max_i |p_n(y_i)-p_n^*(y_i)|.  \label{ineq2}
	\end{align}
	
	Combining \eqref{ineq1} and \eqref{ineq2} yields
	\begin{align*} 
		&\mathbb{P}\bigg[\| p_n-p_n^* \|>\left( \frac{2}{\pi}\log(n+1)+1 \right) \sqrt{n+1} \left( 2t \frac{\sigma}{\sqrt{N}}+\sqrt{8} \|r_n\|_\infty \right) \bigg] \\
		\leq \ & 2(n+1)\exp \left( -\frac{t^2}{2} \right).
	\end{align*}
	Since $\| p_n-f \|_\infty \leq \| p_n-p_n^* \|_\infty+\| r_n \|_\infty$, {the proof is completed}.
\end{proof}

\begin{theorem}\label{thm:infnorm2}
	If the noise $ \{ \epsilon_i \}$ are independent and subexponential with parameter $(\nu,\alpha)$ (see \eqref{subexponential}), then \begin{align*}
		&\mathbb{P}\bigg[\| p_n-f \|_{\infty}>\left( \frac{2}{\pi}\log(n+1)+1 \right) \sqrt{n+1} \left( 2 t \frac{\nu^2}{\alpha} \frac{1}{\sqrt{N}}+\sqrt{8} \|r_n\|_\infty \right) + \|r_n\|_\infty \bigg] \nonumber \\
		\leq \ & 2(n+1)\exp \left( -\frac{\nu^2}{2 \alpha^2} t^2 \right).
	\end{align*}
	for $0 \leq t \leq t_*$, and
	\begin{align*}
		& \mathbb{P} \left[ \| p_n-f \|_{\infty} >\left( \frac{2}{\pi}\log(n+1)+1 \right) \sqrt{n+1} \left( 2 t \frac{\nu^2}{\alpha} \frac{1}{\sqrt{N}}+\sqrt{8} \|r_n\|_\infty \right) + \|r_n\|_\infty \right] \\
		\leq \ & 2(n+1) \exp \left( -\frac{\nu^2}{2 \alpha^2} t \right). 
	\end{align*}
	for $t > t_*$, where
	\[
	t_* = \frac{\|\vct{s}(x)\|_2^2}{2 \max_i |s_i(x)|} \sqrt{\frac{N}{n+1}}.
	\]
\end{theorem}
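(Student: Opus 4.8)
The plan is to follow the proof of Theorem~\ref{thm:infnorm} essentially verbatim, substituting the subexponential pointwise tail bound of Theorem~\ref{thm:point2} for the subgaussian one of Theorem~\ref{thm:point}. The deterministic ingredients—the decomposition $|p_n(x)-p_n^*(x)| \le \|\vct{s}(x)\|_2\|\vct{r}_n\|_2 + |\vct{s}(x)^T\vct{\epsilon}|$, the norm bound \eqref{norm_s}, and the Lebesgue-constant inequality \eqref{ineq2}—are independent of the noise distribution and carry over unchanged. Only the concentration step for the random term $\vct{s}(x)^T\vct{\epsilon}$ needs to be replaced, and this has in fact already been carried out inside the proof of Theorem~\ref{thm:point2}.

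First I would record, exactly as in the opening line of the proof of Theorem~\ref{thm:infnorm}, the pointwise statement for $|p_n(x)-p_n^*(x)|$ rather than for $|p_n(x)-f(x)|$; combining \eqref{pointwise_2} with the two rescaled subexponential tail bounds for $|\vct{s}(x)^T\vct{\epsilon}|$ established in the proof of Theorem~\ref{thm:point2} gives, for each fixed $x$,
\[
\mathbb{P}\left[|p_n(x)-p_n^*(x)|>2t\frac{\nu^2}{\alpha}\sqrt{\frac{n+1}{N}}+\sqrt{8(n+1)}\|r_n\|_\infty\right]\le 2\exp\left(-\frac{\nu^2}{2\alpha^2}t^2\right)
\]
for $0\le t\le t_*$, and the same with exponent $-\frac{\nu^2}{2\alpha^2}t$ for $t>t_*$. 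Next I would apply this at the $n+1$ Chebyshev points $y_0,\dots,y_n$ and take a union bound, picking up the factor $(n+1)$ in front of the exponential exactly as in \eqref{ineq1}. Finally, invoking the deterministic bound \eqref{ineq2} to pass from $\max_i|p_n(y_i)-p_n^*(y_i)|$ to $\|p_n-p_n^*\|_\infty$ (multiplying the pointwise threshold by $\frac{2}{\pi}\log(n+1)+1$ and factoring out $\sqrt{n+1}$), and then using $\|p_n-f\|_\infty\le\|p_n-p_n^*\|_\infty+\|r_n\|_\infty$ to add the trailing $\|r_n\|_\infty$, yields both displayed inequalities.

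The main obstacle, which does not arise in the subgaussian case, is that the crossover threshold $t_*=\frac{\|\vct{s}(x)\|_2^2}{2\max_i|s_i(x)|}\sqrt{\frac{N}{n+1}}$ depends on the evaluation point $x$, whereas the union bound is taken over the entire set $\{y_0,\dots,y_n\}$. To make the two regimes simultaneously well defined at all $n+1$ points, I would interpret $t_*$ as a common threshold obtained by taking the worst case over the Chebyshev points—using $\min_i t_*(y_i)$ in the quadratic regime and $\max_i t_*(y_i)$ in the linear regime—so that a single value of $t$ places every point in the same regime. One must also check that $\max_i|s_i(x)|\le\|\vct{s}(x)\|_2$ and \eqref{norm_s} are applied consistently, so that the rescaling of $t$ reproduces the stated coefficient $2t\frac{\nu^2}{\alpha}\frac{1}{\sqrt{N}}$. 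Beyond this bookkeeping the argument is a routine transcription of the subgaussian proof.
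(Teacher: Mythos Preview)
Your proposal is correct and matches the paper's own proof, which simply states that the argument is the same as Theorem~\ref{thm:infnorm} with Theorem~\ref{thm:point2} substituted for Theorem~\ref{thm:point}. Your observation about the $x$-dependence of $t_*$ and the need for a uniform threshold over the Chebyshev points is a genuine subtlety that the paper itself does not address.
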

\begin{proof}
	The proof is essentially the same {as} Theorem~\ref{thm:infnorm}, {except that} we use Theorem~\ref{thm:point2} instead of Theorem~\ref{thm:point}. \end{proof}

From Theorem~\ref{thm:infnorm}, the infinity-norm error $\|p_n-f\|_\infty$ is $O(\sigma \sqrt{n/N} + \sqrt{n} \| r_n \|_{\infty})$ with high probability.
Recall that $\| r_n \|_{\infty} =\|f-p_n^*\|_\infty \to 0$ as $n \to 0$ at a spectral rate (exponentially for analytic funtions, algebraically for differentiable functions) \cite[Ch.~7,8]{trefethenatap}, and \mainalg\ selects $n$ from $1,2,\dots,\bar{n}(=\lfloor (N+1)/2\rfloor)$ by minimizing Mallows' $C_p$.
Thus, the convergence $p_n\rightarrow f$ of \mainalg\ as $N\rightarrow\infty$ is expected to exhibit the following two-stage behavior:
\begin{itemize}
	\item When $N$ is small so that $\| r_n \|_{\infty}$ cannot be made sufficiently small, $n \approx \bar{n}$ is selected and $\sigma \sqrt{n/N} < \sqrt{n} \| r_n \|_{\infty}$. Thus, $\|p_n-f\|_\infty$ is $O(\sqrt{n} \|r_n\|_\infty)$ with high probability and decays at a spectral rate. 
	
	\item When $N$ is large enough, Mallows' $C_p$ selects an appropriate degree $n$ such that $\sigma \sqrt{n/N} \approx \sqrt{n} \| r_n \|_{\infty}$. Thus, $\|p_n-f\|_\infty$ is bounded by a modest multiple of $\sigma \sqrt{n/N}$ with high probability and decays at Monte-Carlo rate $O(1/\sqrt{N})$ approximately.
\end{itemize}
The interpretation for Theorem \ref{thm:infnorm2} is similar.
We will confirm this with numerical experiments in Section~\ref{sec:exp}. 
Providing a theoretical guarantee for the convergence with Mallow's $C_p$ 
is a compelling open problem. Recent findings on consistency of Mallows' $C_p$ in high-dimensional settings \cite{fujikoshi2014consistency,yanagihara2015consistency} may be useful.

Above, we derived non-asymptotic concentration results for fixed $N$ and $n$.
We reiterate that our analysis does not  account for the effect of selecting $n$ by minimizing Mallows' $C_p$ in \mainalg, although the numerical results below are well explained by the theory presented above. 
It is an interesting future work to develop a rigorous bound for the concentration behavior of \mainalg, including Mallows' $C_p$.

\section{Numerical experiments}\label{sec:exp}
We present a number of experiments to illustrate the performance and properties of \mainalg. 

\subsection{Degree selection}
In addition to {Mallows' $C_p$ and related tools} in statistics, 
there are approaches for degree selection proposed in numerical analysis, which are largely based on examining the Chebyshev coefficients. 
One commonly-used method is Chebfun's StandardChop~\cite{aurentz2017chopping}, which employs a sophisticated and somewhat complicated algorithm to determine the degree. 
{When dealing with noisy functions, it requires} the user to input the noise level. 

Here we compare degree selection by Mallows' $C_p$ with Chebfun's StandardChop. We use StandardChop with two input noise levels $\hat\sigma$: one with the (unknown) exact value $\hat\sigma=\sigma$ (shown as Chop in the figures), and one sets to $\hat\sigma=\mbox{median}(\vct{c}(N/2:end))$, which accounts for the $1/\sqrt{N}$ error reduction, i.e., it takes the input noise $\hat\sigma\approx \sigma/\sqrt{N}$ (Chop-reduced). 

\paragraph{Runge function}
We first take the Runge function $f(x)=\frac{1}{25x^2+1}$ as we did in the introduction. 
This function is smooth and analytic, and the Chebyshev coefficients $|c_j|$ decay geometrically in $j$. We vary the noise level $\sigma$ from $10^{-1}$ to $10^{-8}$ and illustrate the selected degree in Figure~\ref{fig:rungedeg}; the marks indicate the final coefficient of the truncated polynomial {for each method}. 

\begin{figure}[htpb]
	\begin{minipage}[t]{0.325\hsize}
		\includegraphics[width=1\textwidth]{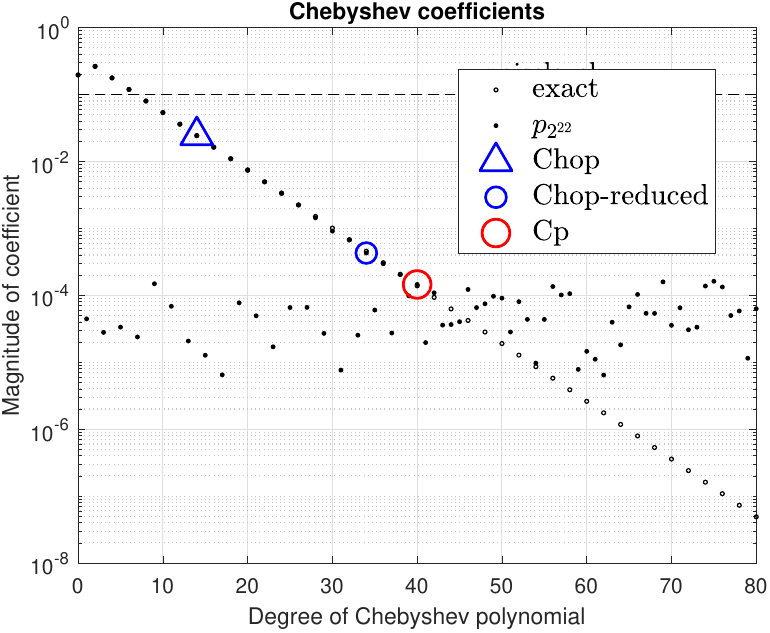}
	\end{minipage}   
	\begin{minipage}[t]{0.325\hsize}
		\includegraphics[width=1\textwidth]{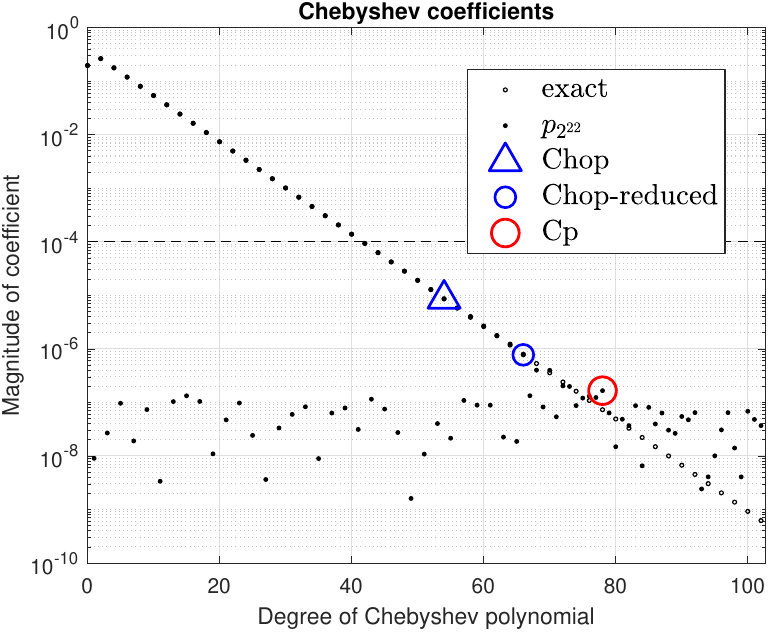}
	\end{minipage}
	\begin{minipage}[t]{0.325\hsize}
		\includegraphics[width=1\textwidth]{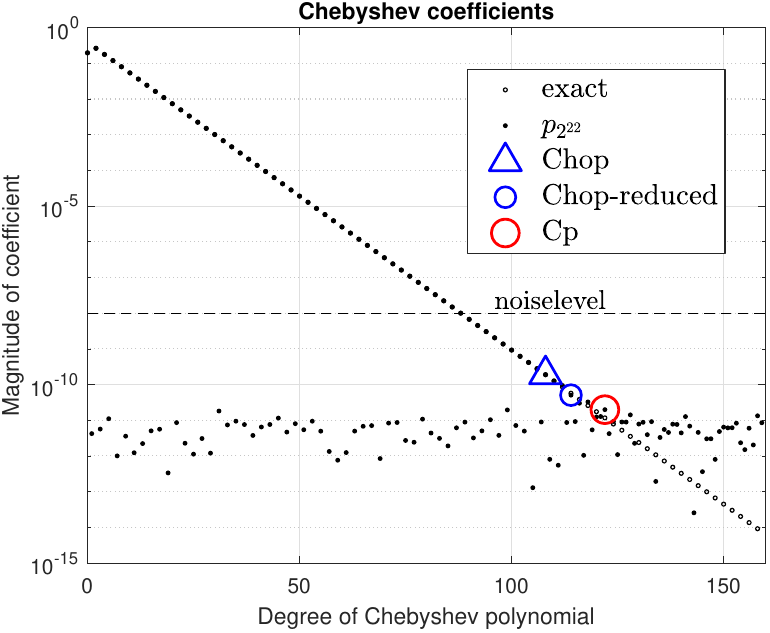}
	\end{minipage}
	\caption{Degree selection for noisy Runge (smooth analytic) function $f(x)=\frac{1}{25x^2+1}$, with varying noise levels $\sigma=10^{-1}$ (left), $\sigma=10^{-4}$ (center), and $\sigma=10^{-8}$ (right).}
	\label{fig:rungedeg}
\end{figure}

We see that the methods work reasonably well in most cases, adapting to the noise level and the number of samples. 
StandardChop usually gives the smallest degree, which is often somewhat premature{, as it does not account for the $\sqrt{n/N}$ noise reduction effect}. 

\paragraph{Highly noisy case}
We next set the noise level to $\sigma=10$ and report in Figure~\ref{fig:runge2}. 
We find it satisfactory to see (in the left panel) that even with a large noise level such as $\sigma=10$ (so each evaluation is dominated by noise), decent accuracy can be obtained by taking a large $N=2^{22}$. In this example, StandardChop with the reduced noise level gave an enormous output $\approx N$, while StandardChop with the original noise level chooses degree $0$ and Mallows' $C_p$ selected a sensible degree $22$. 

\begin{figure}[htbp]
	\centering
	\begin{minipage}[t]{0.325\hsize}
		\includegraphics[width=1\textwidth]{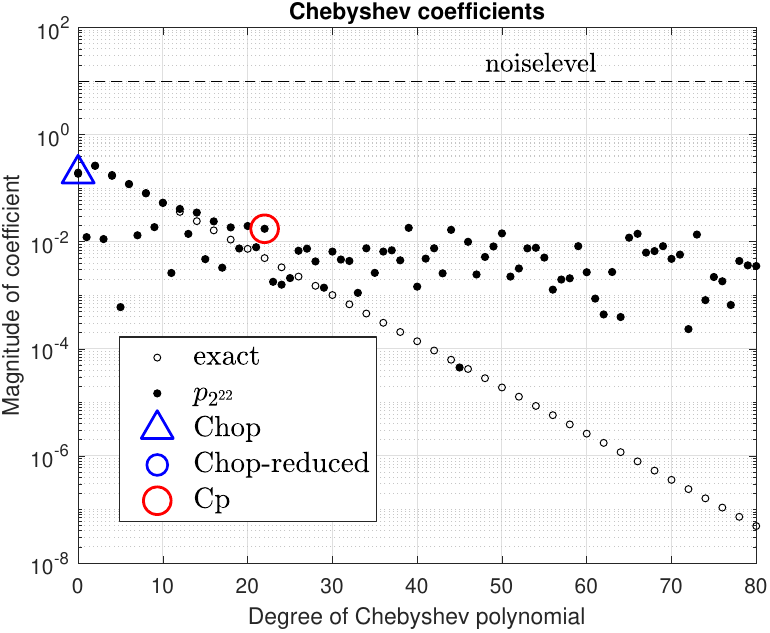}
	\end{minipage}   
	\begin{minipage}[t]{0.325\hsize}
		\includegraphics[width=1\textwidth]{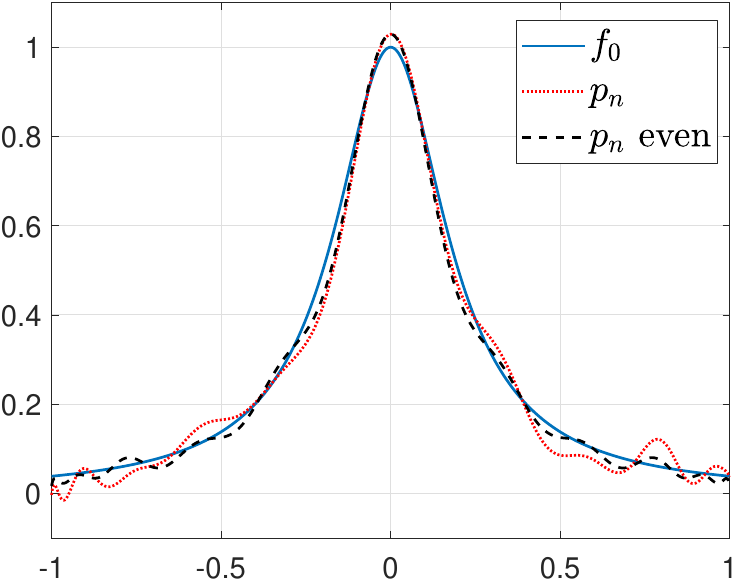}
	\end{minipage}
	\begin{minipage}[t]{0.325\hsize}
		\includegraphics[width=1\textwidth]{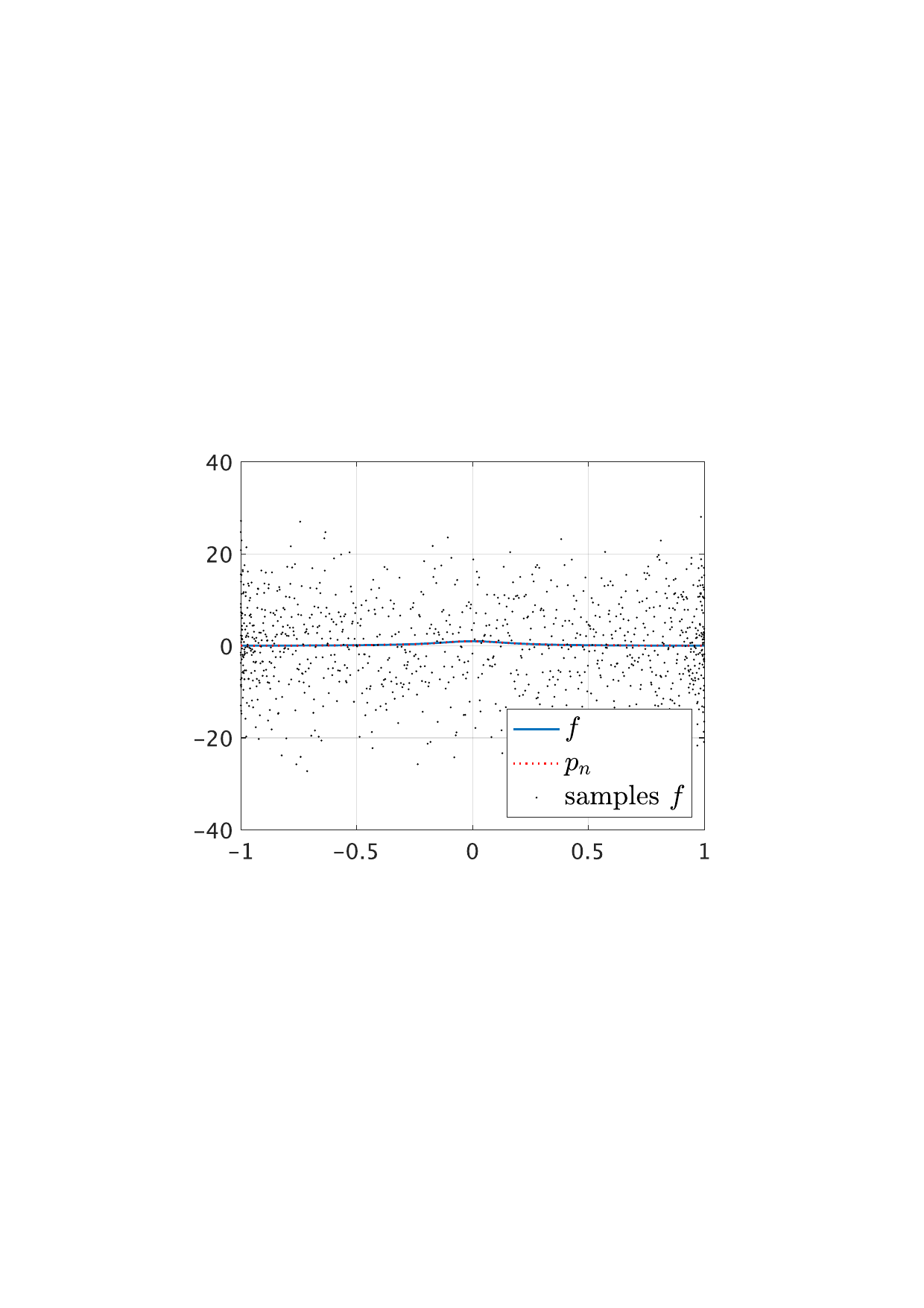}
	\end{minipage}
	
	\caption{Very noisy example $\sigma=10$. Left: same as Figure~\ref{fig:rungedeg} with $\sigma=10$. Center: Target function $f$ (Runge function) and the output of \mainalg\ $p_n$ with $n=22$ from $N=2^{22}$ noisy samples. 
		We also plot the polynomial obtained by setting the odd-degree coefficients of $p_n$ to zero; recall~\cref{fnlabel}. Right: The functions together with the evaluation points; as showing $2^{22}$ points is not feasible, we took a subset of 1000 points chosen uniformly. Even with such noise-dominated evaluations, one obtains a reasonable approximation with \mainalg. 
	}
	\label{fig:runge2}
\end{figure}

\paragraph{Non-analytic functions}
Figure~\ref{fig:abs3deg} repeats the experiments with a less smooth, non-analytic function $f=|x|^3$, for which the Chebyshev coefficients decay algebraically rather than geometrically. 
\begin{figure}[htpb]
	\begin{minipage}[t]{0.325\hsize}
		\includegraphics[width=1\textwidth]{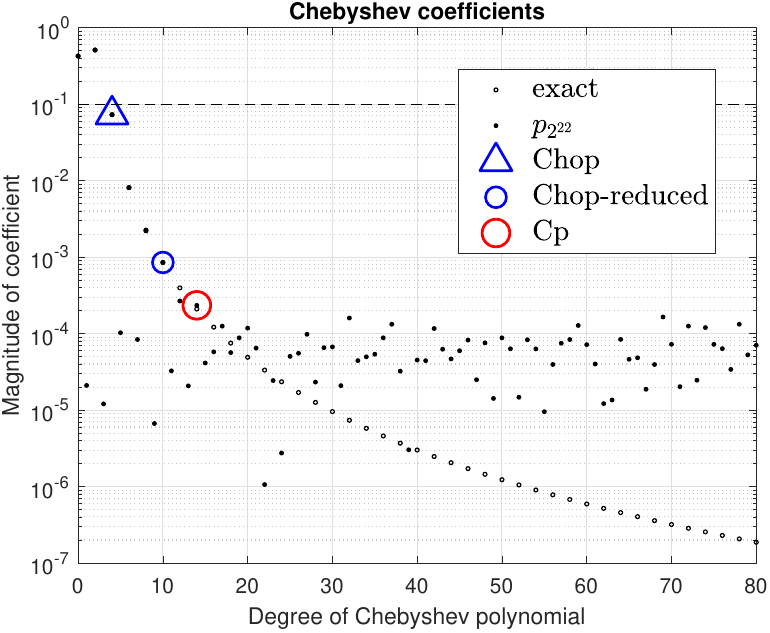}
	\end{minipage}   
	\begin{minipage}[t]{0.325\hsize}
		\includegraphics[width=1\textwidth]{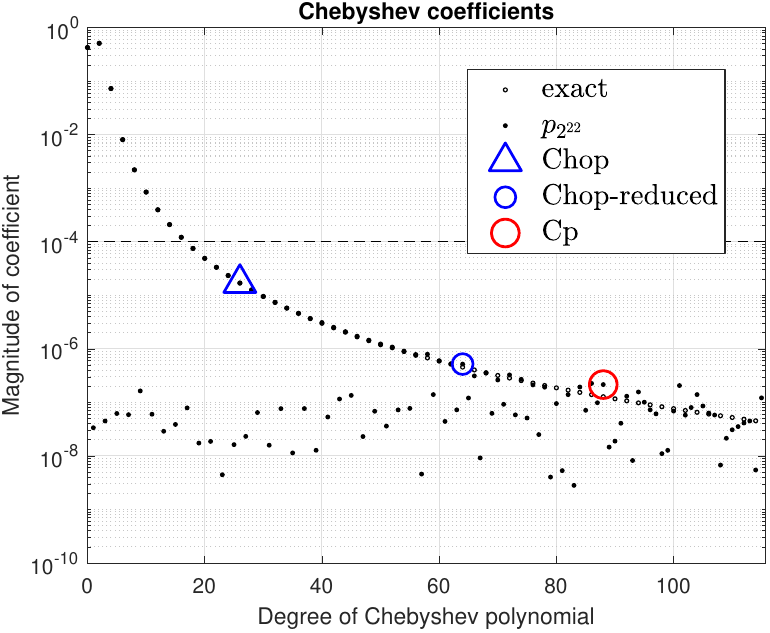}
	\end{minipage}
	\begin{minipage}[t]{0.325\hsize}
		\includegraphics[width=1\textwidth]{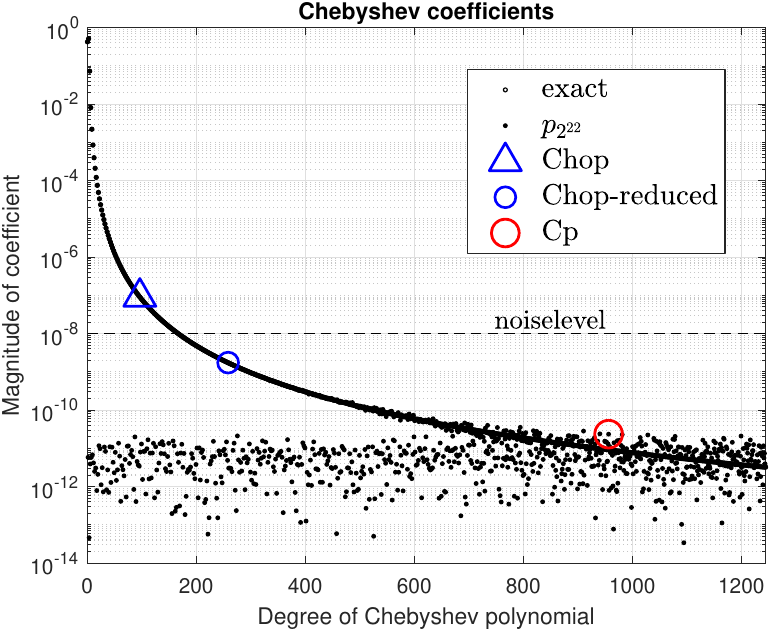}
	\end{minipage}
	\caption{Repeats Figure~\ref{fig:rungedeg} for a different (less smooth) 
		function $f(x)=|x|^3$.}
	\label{fig:abs3deg}
\end{figure}
Largely the same qualitative observations hold here. 
We note that as $n$ becomes quite large here, executing these using the LS approach would be significantly slower than interpolation. 

In all examples so far, we see that Mallows' $C_p$ gives a sensible output. 
This can be understood from the asymptotic minimaxity of the $C_p$-based function estimation \cite{massart2001gaussian}.
By contrast, Chebfun's StandardChop usually chooses a lower degree when the noise input is $\sigma$, and when it is $\sigma\sqrt{n/N}$, 
occasionally selected a degree that is almost equal to $N$. 
Thus, for degree selection in approximating noisy functions, Mallows' $C_p$ is more reliable than StandardChop. This is unsurprising given that StandardChop is not designed specifically to deal with noisy functions but rather to detect convergence plateaus in (usually noiseless) Chebyshev series, whereas the statistical methods are specifically targeted to deal with noise. 

We have repeated these experiments with other types of noise distribution, including uniform and Laplace, and observed that Mallows' $C_p$ always reliably selected a sensible degree $n$. In particular, Figure~\ref{fig:coeffs0} looks almost identical in all these cases (not shown); unsurprisingly the Cauchy distribution caused the error reduction effect to disappear; surprisingly, the degree detection nonetheless performed reasonably well.

\subsection{Concentration behavior: spectral convergence followed by noise reduction}\label{ex:spectral}
Here we illustrate the implications of Theorem~\ref{thm:infnorm}. First, we examine the convergence of \mainalg\ as the number of samples $N$ is increased. We expect the degree $n$ chosen to increase accordingly, and as suggested by Theorem~\ref{thm:infnorm}, we expect the convergence to be spectral until the error reaches $O(\sigma\sqrt{n/N})$. This is confirmed in Figure~\ref{fig:spectral}, where we examine two functions (one smooth+analytic, and one with a discontinuous derivative). The figure clearly demonstrates that when the function is smooth and the noise level is low, rapid convergence is obtained by \mainalg, close to that of the best polynomial approximant.  

\begin{figure}[htbp]
	\centering
	\begin{minipage}[t]{.495\linewidth}
		\includegraphics[width=.915\textwidth]{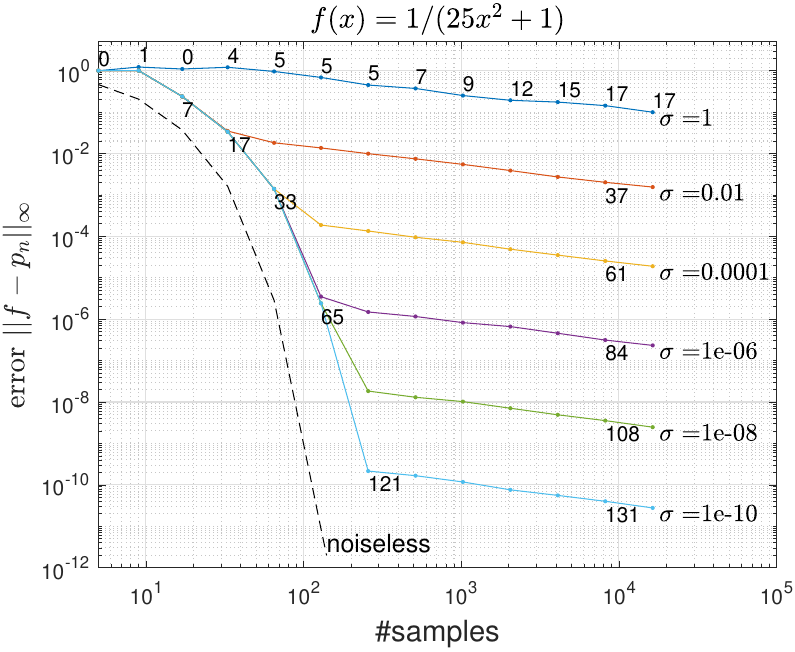}
	\end{minipage}
	\begin{minipage}[t]{.495\linewidth}
		\includegraphics[width=.91\textwidth]{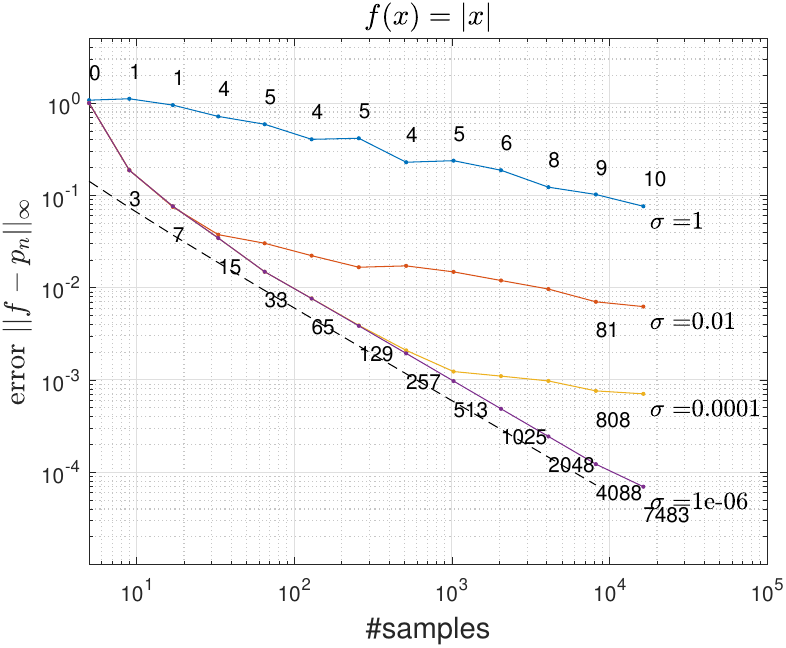}
	\end{minipage}
	\caption{Illustration of spectral convergence followed by $O(\sigma/\sqrt{N})$ error. 
		Left: $f(x)=1/(25x^2+1)$, right: $f(x)=|x|$. 
		The plots show \mainalg's convergence of $\|f-p_n\|_\infty$ as $N$ is varied, for different noise levels $\sigma$. 
		Each data point shows the average value of $\|f-p_n\|_\infty$ out of 10 independent runs. For selected data points, the average degree (rounded) is printed. In dashed lines we also plot the error of Chebyshev interpolation when there is no noise (which is approximately the error of the best polynomial approximation). The plots show that the convergence of \mainalg\ is spectral (and close to the best possible) until the error reaches noise level, after which the $\sigma\sqrt{n/N}$ term dominates. 
	}
	\label{fig:spectral}
\end{figure}

We next illustrate the error concentration phenomenon suggested by Theorem~\ref{thm:infnorm}. We run 1000 independent instances of \mainalg\ applied to the Runge function with noise level $\sigma=10^{-3}$ and $N=2^{13}=8192$. In Figure~\ref{fig:concent-hist} we report the histogram of the values of the resulting error $\|f-p_n\|_{\infty}$, along with the two instances with the largest and smallest errors, and the histogram of the chosen degree $n$. In the left panel, we also plot the 
value 
$\left( \frac{2}{\pi}\log(n+1)+1 \right) \sqrt{n+1} \left( 2  \frac{\sigma}{\sqrt{N}}+\sqrt{8} \|r_n\|_\infty \right) + \|r_n\|_\infty$ (shown as the dashed line in the left panel of Figure~\ref{fig:concent-hist}, with the rounded mean value $n=49$), 
which is what the theorem estimates/bounds $\|p_n-f\|_\infty$ to be in an 'average' case (we took $t=1$ in the theorem, and divided the term with $\sqrt{8}\|r_n\|_{\infty}$ by $\sqrt{N}$ in view of the discussion before Theorem~\ref{thm:point}. Without this division, the estimate would be larger by a factor $\approx 8$ and significantly overestimates the actual errors). We see that the error exhibits strong concentration near its mean, and that the estimate, while being a 
noticeable overestimate (which is expected of such analysis with many opportunities for loose bounds), predicts the right order of magnitude; this is a general phenomenon observed in all examples we tested. The degree is also quite concentrated, here around 50. 
\begin{figure}[htbp]
	\centering
	\begin{minipage}[t]{.45\linewidth}
		\includegraphics[width=1.\textwidth]{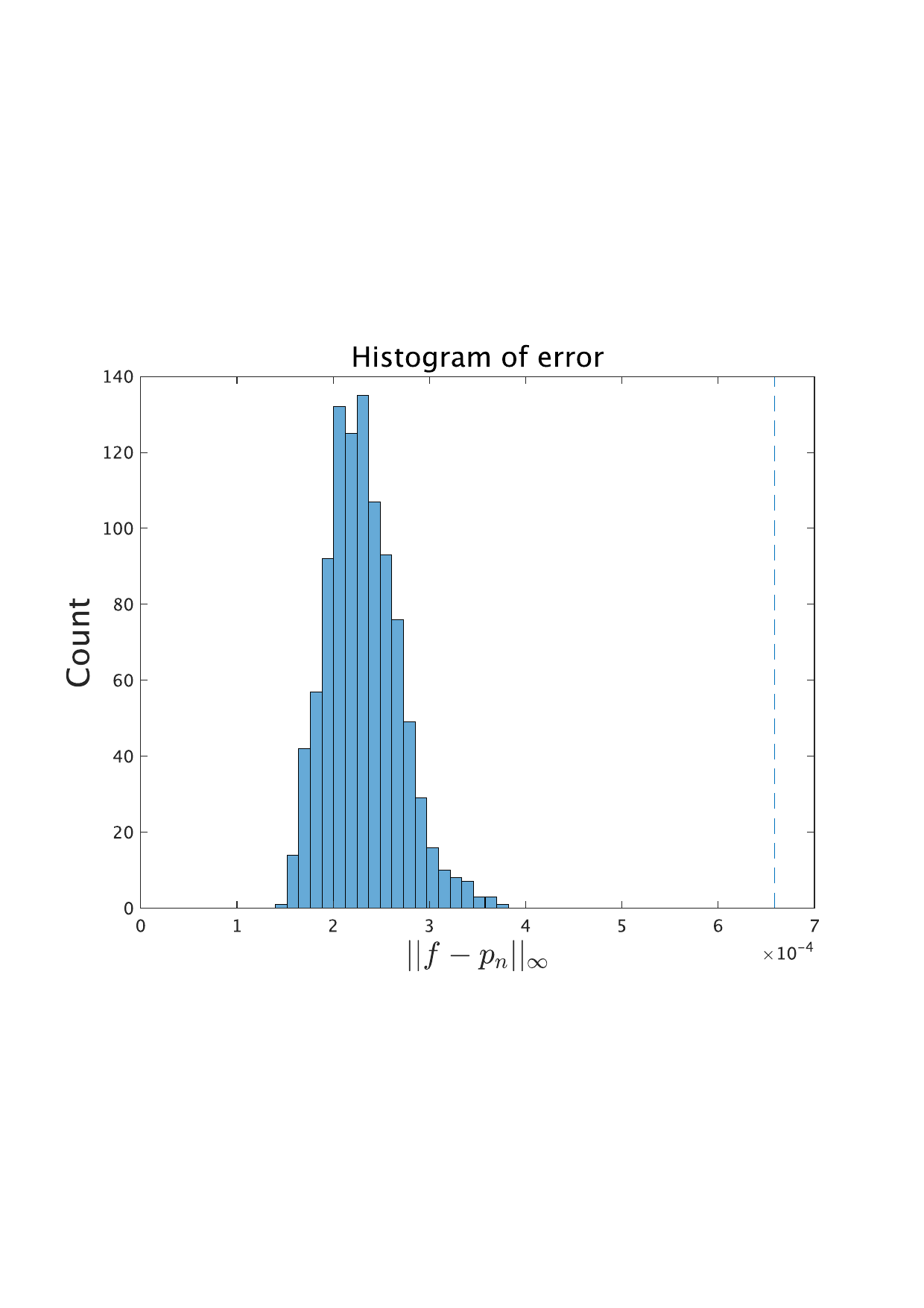}
	\end{minipage}
	\begin{minipage}[t]{.45\linewidth}
		\includegraphics[width=\textwidth]{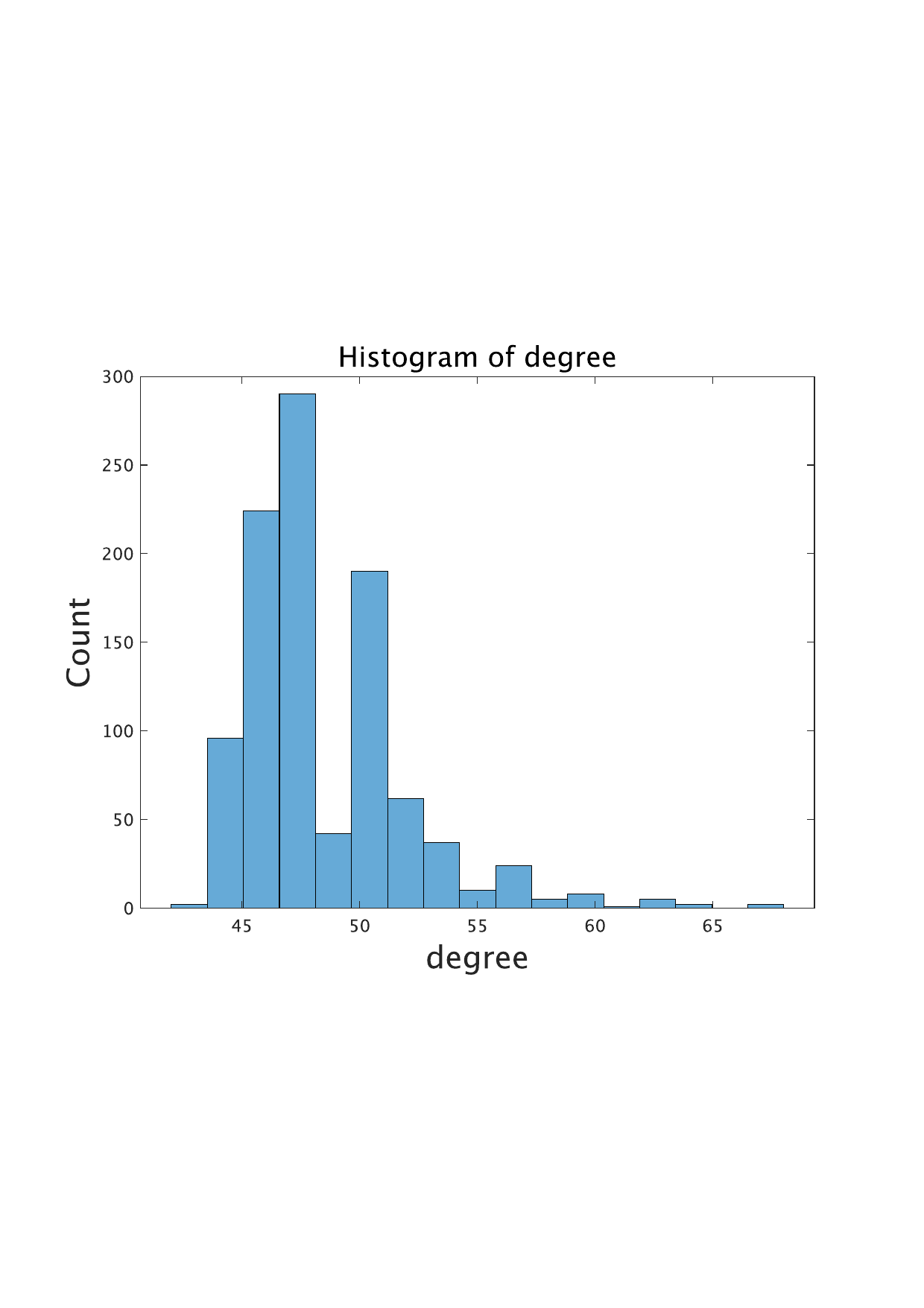}
	\end{minipage}
	\caption{Histogram (out of 1000 runs) of the error (left) and degree (right), and the plot of error $f-p_n$ for the largest (blue) and smallest (red) instance of $\|f-p_n\|_\infty$ (center). }
	\label{fig:concent-hist}
\end{figure}

\section{Discussion} 
Natural extensions of this work would include noisy approximation on other domains (e.g. unions of intervals) and in higher dimensions. 
In hypercubes in $d$ dimensions, a simple idea is to use a tensor-product Chebyshev grid. This will suffer from the curse of dimensionality as $d$ grows. 
To tackle this, possible approaches include utilizing sparse grids~\cite{bungartz2004sparse}, or low-rank methods such as ACA-based methods~\cite{Chebfun2} or tensor-based approaches~\cite{strossner2024approximation}. When working with irregular domains, one will likely need to replace Chebyshev polynomials with a frame~\cite{adcock2019frames,adcock2020approximating} or a better basis~\cite{zhu2023convergencenearoptimalsamplingmultivariate}. 

We have mostly assumed the noise is iid. When noise is heteroskedastic, \mainalg\ will have error uniformly large, roughly proportional to the largest noise divided by $\sqrt{N}$. In such cases, a weighted least-squares approach that respects the heteroskedasticity would be a natural approach. It is an open problem to see if a fast $O(N\log N)$ method is possible in such cases. 

Approximating the derivative(s) in the noisy setting is an important task in many applications~\cite{fioretti1989accurate,ragozin1983error,schafer2011savitzky,zhukweighted}. We expect \mainalg\ to be competitive for this task; a careful comparison and analysis are left for future work.

Another important direction is to extend such methods to nonlinear approximation, in particular rational approximation. 
Rational functions can outperform polynomials when dealing with functions with nonsmoothness or singularities~\cite{trefethenatap}, and algorithms are now available~\cite{nakatsukasa2018aaa} to potentially replace polynomials in many computational settings. Some attempts have been made to tackle noise~\cite{wilber2022data}, but rational functions remain far less extensively explored than polynomials both in theory and algorithmically. 

In this paper, we have focused on the case where the sample points $x_i$ can be chosen to be Chebyshev points. In some cases this is not possible, and one needs to find an approximant from samples at prescribed points, for example equispaced points on $[-1,1]$.  In this case, choosing $n \leq \sqrt{N}$ suffices to ensure numerical stability and this is reminiscent of the conditions for consistency in high-dimensional statistics \cite{portnoy1984asymptotic,yohai1979asymptotic}. There may well be deeper connections between these bodies of literature; working them out is left for future work.

\subsection*{Acknowledgments}
We thank Nick Trefethen for the insightful discussions and comments on a draft, Felix Benning for his comment on DCT-2, and Yifu Zhang for pointing out several typos in an early manuscript. 
We thank the referees for helpful comments.
TM was supported by JSPS KAKENHI Grant Numbers 22K17865, 24K02951 and JST Moonshot Grant Number JPMJMS2024.
YN was supported by EPSRC grants EP/Y010086/1 and EP/Y030990/1.

\bibliographystyle{abbrv}
\bibliography{bib2}

\def\noopsort#1{}\def\l{\char32l}\def\v#1{{\accent20 #1}}
  \let\^^_=\v\def\hbk{hardback}\def\pbk{paperback}
\begin{thebibliography}{10}

\bibitem{adcock2024optimal}
B.~Adcock.
\newblock Optimal sampling for least-squares approximation.
\newblock {\em arXiv preprint arXiv:2409.02342}, 2024.

\bibitem{adcock2019frames}
B.~Adcock and D.~Huybrechs.
\newblock Frames and numerical approximation.
\newblock {\em SIAM Rev.}, 61(3):443--473, 2019.

\bibitem{adcock2020approximating}
B.~Adcock and D.~Huybrechs.
\newblock Approximating smooth, multivariate functions on irregular domains.
\newblock In {\em Forum of Mathematics, Sigma}, volume~8, page e26. Cambridge
  University Press, 2020.

\bibitem{akaike1998information}
H.~Akaike.
\newblock Information theory and an extension of the maximum likelihood
  principle.
\newblock In {\em Selected papers of Hirotugu Akaike}, pages 199--213.
  Springer, New York, 1998.

\bibitem{aurentz2017chopping}
J.~L. Aurentz and L.~N. Trefethen.
\newblock Chopping a {C}hebyshev series.
\newblock {\em ACM Trans. Math. Soft.}, 43(4):33, 2017.

\bibitem{belloni2015some}
A.~Belloni, V.~Chernozhukov, D.~Chetverikov, and K.~Kato.
\newblock Some new asymptotic theory for least squares series: Pointwise and
  uniform results.
\newblock {\em Journal of Econometrics}, 186(2):345--366, 2015.

\bibitem{bishop2006pattern}
C.~M. Bishop and N.~M. Nasrabadi.
\newblock {\em Pattern Recognition and Machine Learning}.
\newblock Springer, New York, 2006.

\bibitem{bungartz2004sparse}
H.-J. Bungartz and M.~Griebel.
\newblock Sparse grids.
\newblock {\em Acta Numer.}, 13:147--269, 2004.

\bibitem{cohen2013stability}
A.~Cohen, M.~A. Davenport, and D.~Leviatan.
\newblock On the stability and accuracy of least squares approximations.
\newblock {\em Found. Comput. Math.}, 13(5):819--834, 2013.

\bibitem{cohen2}
A.~Cohen and G.~Migliorati.
\newblock Optimal weighted least-squares methods.
\newblock {\em SMAI-Journal of Computational Mathematics}, 3:181--203, 2017.

\bibitem{dahlquist2008numerical}
G.~Dahlquist and {\AA}.~Bj{\"o}rck.
\newblock {\em {Numerical Methods in Scientific Computing, Volume I}}.
\newblock SIAM, Philadelphia, 2008.

\bibitem{draper1998applied}
N.~R. Draper and H.~Smith.
\newblock {\em Applied regression analysis}, volume 326.
\newblock John Wiley \& Sons, 1998.

\bibitem{chebfunofficial}
T.~A. Driscoll, N.~Hale, and L.~N. Trefethen.
\newblock {\em Chebfun Guide}.
\newblock Pafnuty Publications, Oxford, 2014.

\bibitem{fioretti1989accurate}
S.~Fioretti and L.~Jetto.
\newblock Accurate derivative estimation from noisy data: a state-space
  approach.
\newblock {\em International Journal of Systems Science}, 20(1):33--53, 1989.

\bibitem{fujikoshi2014consistency}
Y.~Fujikoshi, T.~Sakurai, and H.~Yanagihara.
\newblock Consistency of high-dimensional aic-type and cp-type criteria in
  multivariate linear regression.
\newblock {\em Journal of Multivariate Analysis}, 123:184--200, 2014.

\bibitem{hastie2015statistical}
T.~Hastie, R.~Tibshirani, and M.~Wainwright.
\newblock {\em Statistical learning with sparsity}.
\newblock CRC Press, New York, 2015.

\bibitem{james2013introduction}
G.~James, D.~Witten, T.~Hastie, and R.~Tibshirani.
\newblock {\em An Introduction to Statistical Learning}.
\newblock Springer, New York, 2013.

\bibitem{konishi2008information}
S.~Konishi and G.~Kitagawa.
\newblock {\em Information Criteria and Statistical Modeling}.
\newblock Springer, New York, 2008.

\bibitem{mallows2000some}
C.~L. Mallows.
\newblock Some comments on $c_p$.
\newblock {\em Technometrics}, 15(4):661--675, 1973.

\bibitem{mason2010chebyshev}
J.~C. Mason and D.~C. Handscomb.
\newblock {\em Chebyshev Polynomials}.
\newblock CRC Press, Boca Raton, 2010.

\bibitem{massart2001gaussian}
P.~Massart and L.~Birg{\'e}.
\newblock Gaussian model selection.
\newblock {\em Journal of the European Mathematical Society}, 3(3):203--268,
  2001.

\bibitem{migliorati2015convergence}
G.~Migliorati, F.~Nobile, and R.~Tempone.
\newblock Convergence estimates in probability and in expectation for discrete
  least squares with noisy evaluations at random points.
\newblock {\em J. Multivar. Anal.}, 142:167--182, 2015.

\bibitem{MCLSpreprint}
Y.~Nakatsukasa.
\newblock {Approximate and integrate: Variance reduction in Monte Carlo
  integration via function approximation}.
\newblock {\em arXiv:1806.05492}, 2018.

\bibitem{nakatsukasa2018aaa}
Y.~Nakatsukasa, O.~S{\`e}te, and L.~N. Trefethen.
\newblock The {AAA} algorithm for rational approximation.
\newblock {\em SIAM J. Sci. Comp.}, 40(3):A1494--A1522, 2018.

\bibitem{nevai1986geza}
P.~Nevai.
\newblock G{\'e}za {Freud}, orthogonal polynomials and {C}hristoffel functions.
  {A} case study.
\newblock {\em J. Approx. Theory}, 48(1):3--167, 1986.

\bibitem{paige1982lsqr}
C.~C. Paige and M.~A. Saunders.
\newblock {LSQR}: An algorithm for sparse linear equations and sparse least
  squares.
\newblock {\em ACM Trans. Math. Soft.}, 8(1):43--71, 1982.

\bibitem{portnoy1984asymptotic}
S.~Portnoy.
\newblock Asymptotic behavior of m-estimators of p regression parameters when
  $p^2/n$ is large. i. consistency.
\newblock {\em The Annals of Statistics}, pages 1298--1309, 1984.

\bibitem{ragozin1983error}
D.~L. Ragozin.
\newblock Error bounds for derivative estimates based on spline smoothing of
  exact or noisy data.
\newblock {\em J. Approx. Theory}, 37(4):335--355, 1983.

\bibitem{schafer2011savitzky}
R.~W. Schafer.
\newblock What is a {S}avitzky-{G}olay filter? [lecture notes].
\newblock {\em IEEE Signal processing magazine}, 28(4):111--117, 2011.

\bibitem{strang1999discrete}
G.~Strang.
\newblock The discrete cosine transform.
\newblock {\em SIAM Rev.}, 41(1):135--147, 1999.

\bibitem{strossner2024approximation}
C.~Str{\"o}ssner, B.~Sun, and D.~Kressner.
\newblock Approximation in the extended functional tensor train format.
\newblock {\em Adv. in Comput. Math.}, 50(3):54, 2024.

\bibitem{Chebfun2}
A.~Townsend and L.~N. Trefethen.
\newblock An extension of {C}hebfun to two dimensions.
\newblock {\em SIAM J. Sci. Comp.}, 35(6):C495--C518, 2013.

\bibitem{trefethen2000spectral}
L.~N. Trefethen.
\newblock {\em Spectral Methods in MATLAB}.
\newblock SIAM, Philadelphia, 2000.

\bibitem{trefethenatap}
L.~N. Trefethen.
\newblock {\em Approximation Theory and Approximation Practice, Extended
  Edition}.
\newblock SIAM, Philadelphia, 2019.

\bibitem{tsybakov}
A.~B. Tsybakov.
\newblock {\em Introduction to Nonparametric Estimation}.
\newblock Springer, New York, 2009.

\bibitem{vershynin2018high}
R.~Vershynin.
\newblock {\em High-dimensional probability: An introduction with applications
  in data science}.
\newblock Cambridge University Press, Cambridge, 2018.

\bibitem{wainwright2019high}
M.~J. Wainwright.
\newblock {\em High-Dimensional Statistics: A Non-Asymptotic Viewpoint}.
\newblock Cambridge University Press, Cambridge, 2019.

\bibitem{wasserman2006all}
L.~Wasserman.
\newblock {\em All of Nonparametric Statistics}.
\newblock Springer, New York, 2006.

\bibitem{wasserman2013all}
L.~Wasserman.
\newblock {\em All of Statistics: a Concise Course in Statistical Inference}.
\newblock Springer, New York, 2013.

\bibitem{wilber2022data}
H.~Wilber, A.~Damle, and A.~Townsend.
\newblock Data-driven algorithms for signal processing with trigonometric
  rational functions.
\newblock {\em SIAM J. Sci. Comp.}, 44(3):C185--C209, 2022.

\bibitem{yanagihara2010unbiased}
H.~Yanagihara and K.~Satoh.
\newblock An unbiased {$C_p$} criterion for multivariate ridge regression.
\newblock {\em Journal of Multivariate Analysis}, 101(5):1226--1238, 2010.

\bibitem{yanagihara2015consistency}
H.~Yanagihara, H.~Wakaki, and Y.~Fujikoshi.
\newblock A consistency property of the aic for multivariate linear models when
  the dimension and the sample size are large.
\newblock {\em Electronic Journal of Statistics}, pages 869--897, 2015.

\bibitem{yohai1979asymptotic}
V.~J. Yohai and R.~A. Maronna.
\newblock Asymptotic behavior of m-estimators for the linear model.
\newblock {\em The Annals of Statistics}, pages 258--268, 1979.

\bibitem{zhu2023convergencenearoptimalsamplingmultivariate}
W.~Zhu and Y.~Nakatsukasa.
\newblock Convergence and near-optimal sampling for multivariate function
  approximations in irregular domains via vandermonde with arnoldi.
\newblock {\em arXiv preprint arXiv:2301.12241}, 2023.

\bibitem{zhukweighted}
S.~Zhuk.
\newblock Weighted pseudoinverse of compact operators and differentiation of
  signals with random noise.
\newblock In {\em 23rd International Symposium on Mathematical Theory of
  Networks and Systems Hong Kong University of Science and Technology, Hong
  Kong}, pages 85--92, 2018.

\end{thebibliography}

\end{document}